\newtheorem{theorem}{Theorem}[section]
\newtheorem{lemma}[theorem]{Lemma}
\newtheorem{proposition}[theorem]{Proposition}
\newtheorem{corollary}[theorem]{Corollary}
\theoremstyle{definition}
\newtheorem{definition}[theorem]{Definition}
\newtheorem{question}[theorem]{Question}
\newtheorem{notation}[theorem]{Notation}
\theoremstyle{remark}
\newtheorem{remark}[theorem]{Remark}
\def\Fq{{\mathbb F}_q}
\def\e{{\mathbf e}}
\def\FF{{\mathbb F}}
\def\Sd{{S_{\le d}}}
\def\hp3{\widehat{\mathbb P}^3}
\newcommand{\supp}{\mathrm{Supp}}
\newcommand{\RM}{\mathrm{RM}}
\def\ZZ{{\mathbb Z}}
\newcommand\hf[1]{{^a\mathrm{HF}_{#1}}}
\newcommand\hfs[1]{{^a\mathrm{HF}_{#1}(u)}}
\def\LT{\mathrm{LT}}
\def\ku{{\FF[x_1, \dots, x_m]_{\le u}}}
\begin{document}
\title[Generalized Hamming weights of affine cartesian codes]{Generalized Hamming weights of affine cartesian codes}
\author{Peter Beelen and Mrinmoy Datta}
\address{Department of Applied Mathematics and Computer Science, \newline \indent
Technical University of Denmark, DK 2800, Kgs. Lyngby, Denmark}
\email{pabe@dtu.dk, mrinmoy.dat@gmail.com}

\begin{abstract}
In this article, we give the answer to the following question: 
Given a field $\FF$, finite subsets $A_1,\dots,A_m$ of $\FF$, and $r$ linearly independent polynomials $f_1,\dots,f_r \in \FF[x_1,\dots,x_m]$ of total degree at most $d$. 
What is the maximal number of common zeros $f_1,\dots,f_r$ can have in $A_1 \times \cdots \times A_m$? 
For $\FF=\Fq$, the finite field with $q$ elements, answering this question is equivalent to determining the generalized Hamming weights of the so-called affine Cartesian codes. 
Seen in this light, our work is a generalization of the work of Heijnen--Pellikaan for Reed--Muller codes to the significantly larger class of affine Cartesian codes.  
\end{abstract}

\maketitle

\section{Introduction}

Let $\FF$ be a field and $A_1, \dots, A_m$ be finite non-empty subsets of $\FF$ consisting of $d_1, \dots, d_m$ elements respectively. For each $i = 1, \dots, m$ we write $A_i = \{\gamma_{i, 1}, \dots, \gamma_{i, d_i} \}$. 
We consider the finite subset of $\FF^m$ given by the cartesian product $\mathcal{A} = A_1 \times \cdots \times A_m.$ Without loss of generality we may assume that $d_1 \le \dots \le d_m$. Define $n := |\mathcal A| = d_1 \cdots d_m$.

Let $S:=\FF[x_1, \dots, x_m]$ denote the polynomial ring in $m$ variables $x_1, \dots, x_m$ and for an integer $d \ge 1$, denote by $\Sd (\mathcal{A})$ the vector subspace of $S$ consisting of polynomials $f$ with $\deg f \le d$ and $\deg_{x_i} f < d_i$ for $i=1, \dots, m$. 
In this article we give the answer to the following main question:
\begin{question}\label{q:main}
For a positive integer $r \le \dim \Sd (\mathcal{A})$, let $f_1, \dots, f_r$ be linearly independent elements of $\Sd (\mathcal{A})$. What is the maximum number of common zeroes that $f_1, \dots, f_r$ can have in $\mathcal{A}$?
\end{question}

Denoting by $Z(f_1,\dots,f_r)$ the set of common zeros of $f_1, \dots, f_r$ in $\FF^m$, we can reformulate this question as: What is the maximum cardinality of $Z(f_1,\dots,f_r) \cap \mathcal A$? As noted in \cite[Thm.3.1]{LRV}, 
we may assume that $d \le \sum_{i=1}^m (d_i - 1)$, since $x_1^{d_1-1}\cdots x_m^{d_m-1}$ is the monomial of highest possible degree in $\Sd (\mathcal{A})$. We will use the notation $k:=\sum_{i=1}^m (d_i - 1)$ in the remainder of this article.

Partial answers to Question \ref{q:main} are known, but the general case is still open. First of all, in case $r = 1$ it was answered in \cite[Prop.3.6]{LRV}. 
Furthermore, for  $A_1=\dots =A_m=\Fq$, the finite field with $q$ elements, the question was settled in \cite{HP} for all values of $r$ using, among others, the theory of order domains applied to Reed--Muller codes. 
In \cite{HP}, Question \ref{q:main} was answered in a reformulated form in terms of so-called generalized Hamming weights of certain error-correcting codes. 
Also in \cite {LRV} it was observed that the answer to Question \ref{q:main} for the case $r=1$ gives the minimum distance of what they called affine cartesian codes. It was brought to our attention by Olav Geil that, these codes were already studied in \cite{GT1} in a more general setting and the answer to Question \ref{q:main} for $r=1$ is a special case of \cite[Prop. 5]{GT1}.  Therefore, after having answered Question \ref{q:main}, we compute the generalized Hamming weights of affine cartesian codes. Moreover, we explicitly determine the duals of affine cartesian codes and as a consequence  obtain these weights for the duals as well. 

The article is organized as follows: In Section 2, we collect some results from the theory of affine Hilbert functions and their relations to counting the number of points on a zero dimensional affine variety. In Section 3, we revisit a combinatorial result of Wei \cite[Lemma 6]{W} and prove it completely in a more general setting. Next, in Section 4, we answer Question \ref{q:main} and in Section 5, we determine the generalized Hamming weights of affine cartesian codes and their duals.

\section{Affine Hilbert functions and number of points on a zero dimensional affine variety}

The set of common zeroes of $f_1, \dots, f_r$ in $\mathcal{A}$ is of course a finite subset of $\FF^m$. Therefore, it has a natural interpretation as a zero dimensional affine variety. For this reason, we explore in this section the theory of affine Hilbert functions and discuss its relation with the number of points on zero dimensional affine varieties. This relation will be used in subsequent sections. Many results on affine Hilbert functions exist in the literature. For a detailed discussion on the results mentioned in this section, one may for example refer to \cite{CLO} and \cite{NW}.

Let $\ku$ denote the subset of $S = \FF[x_1, \dots, x_m]$ consisting of polynomials of degree at most $u$. For an ideal $I$ of $S$, we denote by $I_{\le u}$ the subset of $I$ consisting of polynomials of degree at most $u$. Note that both $\ku$ and $I_{\le u}$ are vector spaces over $\FF$. The function
$$\hf{I} : \ZZ \to \ZZ \ \ \ \ \mathrm{given \ by} \ \ \ \ \hf{I} (u) = \dim \ku - \dim I_{\le u}$$
is called the \emph{affine Hilbert function} of $I$. One may readily observe that, if $I \subset J$ then $\hf{I}(u) \ge \hf{J} (u)$.

Similarly, given a subset $X$ of $\FF^m$ we define the \emph{affine Hilbert function} of $X$, denoted by, $\hfs{X}$ as $\hfs{X} = \hf{I(X)} (u)$, where $I(X)$ is the ideal of $S$ consisting of polynomials of $S$ that vanishes at every point of $X$. It is easy to show that, if $X \subset Y$ then $\hfs{X} \le \hfs{Y}$.

To compute the affine Hilbert function of a given ideal $I \subset S$, one can use the theory of monomial ideals, i.e., ideals generated by monomials. 
For a given graded order $\prec$ on $S$ one defines $\LT(I)$ to be the ideal generated by $\{\LT(f) : f \in I\},$ where $\LT(f)$ denotes the leading monomial of $f$ under $\prec$. 
Then we have the following well-known proposition. For a proof one may refer to \hbox{Section 3 of Chapter 9 of \cite{CLO}}.

\begin{proposition}\label{ahf}
Let $\prec$ be a graded order on $S$.
\begin{enumerate}
\item[(a)] For any ideal $I$ of $S$, we have $\hf{\LT (I)}(u)=\hf{I}(u)$ for any $u \in \ZZ$.

\item[(b)] If $I$ is a monomial ideal of $S$ then $\hf{I} (u)$ is the number of monomials of degree at most $u$ that do not lie in $I$.
\end{enumerate}
\end{proposition}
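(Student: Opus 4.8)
The plan is to prove part (b) first and then deduce part (a) from it, working throughout at the level of dimensions. For (b), assume $I$ is a monomial ideal. The essential observation is that a polynomial $f=\sum_\alpha c_\alpha x^\alpha$ lies in $I$ if and only if every monomial $x^\alpha$ with $c_\alpha\neq 0$ lies in $I$: one implication is trivial, and for the other one writes $f=\sum_j h_j m_j$ with the $m_j$ monomial generators of $I$, expands, and compares monomials, so that each $x^\alpha$ occurring in $f$ is divisible by some $m_j$ and hence lies in $I$. Consequently the monomials of degree at most $u$ lying in $I$ form an $\FF$-basis of $I_{\le u}$, so $\dim I_{\le u}$ is the number of such monomials; since $\dim\ku$ is the total number of monomials of degree at most $u$, subtracting gives that $\hf{I}(u)$ counts exactly the monomials of degree at most $u$ \emph{not} lying in $I$.

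For (a), fix $u\in\ZZ$; it suffices to prove $\dim I_{\le u}=\dim\LT(I)_{\le u}$, since together with (b) applied to the monomial ideal $\LT(I)$ this gives $\hf{I}(u)=\hf{\LT(I)}(u)$. Here the graded hypothesis on $\prec$ is crucial, entering through the fact that $\deg\LT(f)=\deg f$ for every nonzero $f\in S$. First I would show that every monomial $x^\alpha\in\LT(I)$ with $\deg x^\alpha\le u$ arises as $\LT(h)$ for some $h\in I_{\le u}$: since $\LT(I)$ is the monomial ideal generated by $\{\LT(g):g\in I\}$, such an $x^\alpha$ is divisible by $\LT(g)$ for some $g\in I$, say $x^\alpha=x^\beta\,\LT(g)$, and then $h:=x^\beta g\in I$ has $\LT(h)=x^\alpha$ and $\deg h=\deg x^\alpha\le u$. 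Enumerate the monomials of $\LT(I)$ of degree at most $u$ as $x^{\alpha_1},\dots,x^{\alpha_N}$, so $N=\dim\LT(I)_{\le u}$ by (b), and pick $g_i\in I_{\le u}$ with $\LT(g_i)=x^{\alpha_i}$.

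It then remains to check that $\{g_1,\dots,g_N\}$ is a basis of $I_{\le u}$. Linear independence is immediate since the $g_i$ have pairwise distinct leading monomials. For spanning, take a nonzero $f\in I_{\le u}$: then $\LT(f)\in\LT(I)$ and $\deg\LT(f)=\deg f\le u$, so $\LT(f)=x^{\alpha_j}$ for some $j$, and replacing $f$ by $f-(\mathrm{LC}(f)/\mathrm{LC}(g_j))\,g_j$ yields an element of $I_{\le u}$ that is either $0$ or has a strictly $\prec$-smaller leading monomial. Since there are only finitely many monomials of degree at most $u$, iterating this reduction terminates at $0$ and writes $f$ as an $\FF$-linear combination of the $g_i$. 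Hence $\dim I_{\le u}=N=\dim\LT(I)_{\le u}$, which with (b) yields $\hf{I}(u)=\hf{\LT(I)}(u)$.

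The hard part is really just the degree bookkeeping: the graded hypothesis must be invoked both to lift a degree-$\le u$ monomial of $\LT(I)$ back to a degree-$\le u$ element of $I$ and to ensure that the reduction steps never raise the degree above $u$ (equivalently, that the leading monomial of a degree-$\le u$ polynomial has degree $\le u$). Without this hypothesis the dimension equality in (a) can genuinely fail, so this is the step I would be most careful to state correctly; the rest is routine linear algebra together with the familiar division/reduction argument.
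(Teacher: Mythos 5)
Your proof is correct: part (b) via the standard fact that membership in a monomial ideal is detected monomial-by-monomial, and part (a) by lifting each degree-$\le u$ monomial of $\LT(I)$ to an element of $I_{\le u}$ and running the usual reduction argument, with the graded hypothesis correctly invoked to keep the lift in degree $\le u$. The paper itself gives no proof of this proposition, referring instead to Chapter 9, Section 3 of \cite{CLO}; your argument is essentially the standard one found there, so there is nothing to reconcile.
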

The next known proposition, taken from \cite[Lemma 2.1]{NW}, relates affine Hilbert functions of zero-dimensional ideals with the number of points in the corresponding variety. Similar statements (though formulated in the language of so-called footprints) can be found in \cite[Cor.2.5]{CLO2} and \cite[Cor.4.5]{G}.
\begin{proposition}\label{ZW}
Let $Y \subset \FF^m$ be a finite set. Then $|Y| = \hfs{Y}$ for all sufficiently large values of $u$.
\end{proposition}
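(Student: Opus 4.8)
The plan is to prove that for a finite set $Y \subset \FF^m$, the affine Hilbert function $\hfs{Y}$ stabilizes at the value $|Y|$ for all sufficiently large $u$. The key is to produce, for each point of $Y$, a polynomial that ``separates'' it from the others, and to use these to understand the quotient $S/I(Y)$.

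First I would recall that by definition $\hfs{Y} = \dim \ku - \dim I(Y)_{\le u}$, which equals the dimension of the image of $\ku$ under the natural projection $S \to S/I(Y)$. So the statement to prove is equivalent to showing that, for $u$ large enough, $\ku$ surjects onto $S/I(Y)$ and that $\dim_\FF S/I(Y) = |Y|$. To see the latter, write $Y = \{P_1,\dots,P_N\}$ with $N = |Y|$ and consider the evaluation map $\mathrm{ev} \colon S \to \FF^N$ sending $f \mapsto (f(P_1),\dots,f(P_N))$. Its kernel is exactly $I(Y)$, so $S/I(Y)$ embeds into $\FF^N$; hence $\dim_\FF S/I(Y) \le N$. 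For surjectivity of $\mathrm{ev}$, for each $j$ I would construct a polynomial $g_j \in S$ with $g_j(P_i) = \delta_{ij}$: since $P_j \ne P_i$ for $i \ne j$, some coordinate distinguishes them, so a suitable product of linear factors (one for each $i \ne j$), normalized by a nonzero constant, does the job. Thus $\mathrm{ev}$ is onto and $\dim_\FF S/I(Y) = N = |Y|$; in particular $\hfs{Y} \le |Y|$ for every $u$, and the sequence $\hfs{Y}$ is nondecreasing in $u$ and bounded.

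Next I would show the bound is attained for large $u$. Let $D$ be an integer at least as large as the total degree of each of the separating polynomials $g_1,\dots,g_N$ (one can take $D = \max_j \deg g_j$, and from the construction above $\deg g_j \le$ something like $\sum_i |A_i|$ or more simply a bound depending only on $Y$). For any $u \ge D$, the images of $g_1,\dots,g_N$ lie in the image of $\ku$ in $S/I(Y)$; since $\mathrm{ev}(g_1),\dots,\mathrm{ev}(g_N)$ are the standard basis vectors of $\FF^N$, these images span $S/I(Y)$. Therefore the image of $\ku$ in $S/I(Y)$ is all of $S/I(Y)$, which has dimension $N$, so $\hfs{Y} = \dim \ku - \dim I(Y)_{\le u} = \dim \bigl(\ku/I(Y)_{\le u}\bigr) = N = |Y|$ for all $u \ge D$.

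I do not expect a serious obstacle here; the argument is essentially a Lagrange-interpolation computation. The one point requiring a little care is ensuring that the separating polynomials $g_j$ genuinely exist and have controlled degree: this needs only that $Y$ is finite and that distinct points differ in some coordinate, after which $g_j$ is a product over $i \ne j$ of affine-linear polynomials vanishing at $P_i$ but not $P_j$, divided by the (nonzero) value of that product at $P_j$. Alternatively, and perhaps more cleanly, one can invoke Proposition~\ref{ahf}: choosing a graded order, $\hf{I(Y)}(u) = \hf{\LT(I(Y))}(u)$ counts monomials of degree $\le u$ outside the monomial ideal $\LT(I(Y))$; since $S/I(Y)$ is finite-dimensional, $\LT(I(Y))$ contains all monomials of degree $\ge$ some bound, so this count stabilizes, and its stable value is $\dim_\FF S/I(Y) = |Y|$. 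Either route is routine, so the proof is short.
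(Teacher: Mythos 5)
Your proof is correct, but there is nothing in the paper to compare it against line by line: the paper does not prove Proposition~\ref{ZW} at all, it simply imports it from the literature (it is ``taken from [NW, Lemma 2.1]'', with footprint-flavoured analogues cited in [CLO2] and [G]). What you supply is the standard self-contained argument: identify $\hfs{Y}$ with the dimension of the image of $\ku$ in $S/I(Y)$, show via the evaluation map and Lagrange-type separating polynomials $g_j$ (products of affine-linear factors distinguishing $P_j$ from each $P_i$, $i\ne j$) that $S/I(Y)\cong \FF^{|Y|}$, and observe that once $u$ exceeds $\max_j \deg g_j$ the classes of the $g_j$ already lie in the image of $\ku$, forcing $\hfs{Y}=|Y|$. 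This is essentially the proof one finds in the cited sources, so your route buys self-containedness at no real cost. Two small remarks: the aside bounding $\deg g_j$ by ``something like $\sum_i |A_i|$'' is out of place here, since $Y$ is an arbitrary finite subset of $\FF^m$ and the sets $A_i$ play no role in this proposition (the trivial bound $\deg g_j\le |Y|-1$ is what your construction gives and is all you need); and your alternative sketch via $\LT(I(Y))$ quietly uses the fact that the monomials outside $\LT(I(Y))$ form a basis of $S/I(Y)$, which goes slightly beyond what Proposition~\ref{ahf} states, so the interpolation argument is the cleaner of your two routes.
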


Now we come back to Question \ref{q:main}. Note that $I(\mathcal{A})$ contains the polynomials
$$g_i := \prod_{j=1}^{d_i} (x_i - \gamma_{i, j}) \ \ \ \mathrm{for} \ i = 1, \dots, m.$$ This means that $\LT (I(\mathcal{A}))$ contains the monomials $x_1^{d_1}, \dots, x_m^{d_m}$. Further, given $r$ linearly independent polynomials $f \in \Sd (\mathcal{A})$
the ideal $$\mathcal{J} := \LT (I (Z(f_1, \dots, f_r) \cap \mathcal{A}))$$ contains the monomials $\LT(f_1), \dots, \LT(f_r)$ along with the monomials $x_1^{d_1}, \dots, x_m^{d_m}$. We may assume w.l.o.g.~that $\LT(f_1), \dots, \LT(f_r)$ are distinct using our assumption that $f_1, \dots, f_r$ are linearly independent. Thus,
\begin{equation}\label{eq:IJ}
\mathcal{I} := \langle \LT(f_1), \dots, \LT(f_r), x_1^{d_1}, \dots, x_m^{d_m} \rangle \subseteq \mathcal{J}.
 \end{equation}
This implies that $\hf{\mathcal{J}} (u) \le \hf{\mathcal{I}}(u)$ for all $u \in \ZZ$. By Propositions \ref{ahf} and \ref{ZW}, we have,
\begin{equation}\label{ub}
|Z(f_1, \dots, f_r) \cap \mathcal{A}| = \hf{\mathcal{J}} (u) \le \hf{\mathcal{I}}(u)
\end{equation}
for all sufficiently large values of $u$.

The above shows that Hilbert functions of monomial ideals can be used to answer Question \ref{q:main}. The following proposition gives a very useful way of determining such Hilbert functions, see \cite[\S 2.4, Lemma 2 and \S 9.3, Prop.3]{CLO} for a proof.

\begin{proposition}[\cite{CLO}]\label{compute} Let $M$ be a monomial ideal and $u$ be a positive integer.
\begin{enumerate}
\item[(a)]  Then $\hf{M}(u)$ is given by the number of monomials of degree at most $u$ that do not belong to $M$.
\item[(b)]  Let $M$ be generated by monomials $m_1, \dots, m_s$ and let $m$ be an arbitrary monomial. Then $m \in M$ if and only if $m_i | m$ for some $i =1, \dots, s$.
\end{enumerate}
\end{proposition}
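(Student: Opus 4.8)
The plan is to establish part (b) first, since it is the combinatorial engine behind part (a), and then deduce (a) by a dimension count. For part (b), the implication ``$\Leftarrow$'' is immediate: if $m_i \mid m$ then $m = (m/m_i)\, m_i \in M$. For ``$\Rightarrow$'', I would start from $m \in M = \langle m_1, \dots, m_s\rangle$, write $m = \sum_{i=1}^s h_i m_i$ with $h_i \in S$, and expand each $h_i$ as an $\FF$-linear combination of monomials, so that the right-hand side becomes $\sum_{i,j} c_{ij}\, n_{ij} m_i$ with each $n_{ij}$ a monomial. Since the left-hand side is the single monomial $m$ with coefficient $1$, comparing the coefficient of $m$ forces $m$ to equal $n_{ij} m_i$ for at least one pair $(i,j)$, and hence $m_i \mid m$. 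The one point needing a little care is that cancellation among the monomials $n_{ij}m_i$ on the right could in principle occur; but the coefficient of the specific monomial $m$ in $\sum_{i,j} c_{ij} n_{ij} m_i$ is the sum of those $c_{ij}$ with $n_{ij}m_i = m$, and for this to be nonzero at least one such term must be present.

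For part (a), I would first show that $M$, viewed as an $\FF$-vector space, has a basis consisting precisely of the monomials it contains. Linear independence is clear, since distinct monomials are linearly independent in $S$. For spanning, take $f \in M$ and write $f = \sum_i h_i m_i = \sum_{i,j} c_{ij}\, n_{ij} m_i$ as above; any monomial $\nu$ occurring in $f$ with nonzero coefficient must equal $n_{ij}m_i$ for some $(i,j)$, hence is divisible by $m_i$ and lies in $M$ by part (b), so $f$ is an $\FF$-linear combination of monomials lying in $M$. Since the total degree of $f$ being at most $u$ means every monomial in its support has degree at most $u$, the subspace $M_{\le u}$ has as a basis the set of monomials in $M$ of degree at most $u$, while $\ku$ has as a basis the set of all monomials of degree at most $u$. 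Subtracting gives
\[
\hf{M}(u) \;=\; \dim \ku - \dim M_{\le u} \;=\; \#\{\text{monomials of degree} \le u\} - \#\{\text{monomials of degree} \le u \text{ in } M\},
\]
which is exactly the number of monomials of degree at most $u$ that do not belong to $M$.

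The argument is elementary throughout. The only mildly delicate point is the bookkeeping in part (b) — reused in the spanning step of (a) — namely that cancellation among the monomial terms on the right-hand side of $m = \sum h_i m_i$ cannot prevent the monomial $m$ itself from appearing among them. Once that is in hand, everything reduces to the observation that both $\ku$ and $M_{\le u}$ are spanned by (and have bases consisting of) monomials, after which part (a) is a direct count.
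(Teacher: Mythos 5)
Your proof is correct and complete. The paper itself does not prove this proposition but simply cites Cox--Little--O'Shea (\S 2.4, Lemma 2 and \S 9.3, Prop.~3 of \cite{CLO}), and your argument — divisibility of a monomial in a monomial ideal by some generator via coefficient comparison, then the observation that $M_{\le u}$ has a basis of monomials, followed by the dimension count — is essentially the standard proof given in that reference, so there is nothing to add.
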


\begin{corollary}\label{fp:ideal}
Let $M$ be a monomial ideal and $u$ be a positive integer. Then $\hf{M}(u)$ is given by the number of monomials of degree at most $u$ which are not divisible by any of the generators of $M$.
\end{corollary}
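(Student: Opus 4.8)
The plan is to simply combine the two parts of Proposition \ref{compute}. First I would invoke part (a) of that proposition: by definition of the affine Hilbert function together with Proposition \ref{ahf}(b), the value $\hf{M}(u)$ equals the number of monomials of degree at most $u$ that do not belong to $M$. So the task reduces to characterizing, among all monomials of degree at most $u$, exactly which ones lie outside $M$.

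Next I would fix a finite generating set $m_1,\dots,m_s$ of monomials for $M$, which exists because $S$ is Noetherian (or, more elementarily, because $M$ is generated by monomials and $S$ is a polynomial ring over a field). Then part (b) of Proposition \ref{compute} states that for an arbitrary monomial $m$, we have $m \in M$ if and only if $m_i \mid m$ for some $i \in \{1,\dots,s\}$. Contrapositively, $m \notin M$ precisely when $m_i \nmid m$ for every $i$, i.e. when $m$ is not divisible by any generator of $M$.

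Putting these together, the monomials of degree at most $u$ not in $M$ are exactly the monomials of degree at most $u$ that are divisible by none of the generators $m_1,\dots,m_s$; counting them yields $\hf{M}(u)$, which is the claimed statement. One should note that the count is independent of the chosen generating set, as it must be since $\hf{M}(u)$ depends only on $M$; this is a harmless consistency check rather than something requiring proof.

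There is essentially no obstacle here: the corollary is a direct logical rephrasing of Proposition \ref{compute}, and the only thing to be slightly careful about is to state it in terms of an arbitrary (finite) generating set, so that it can later be applied to the specific ideals $\mathcal I$ arising in \eqref{eq:IJ}, whose natural generators are $\LT(f_1),\dots,\LT(f_r),x_1^{d_1},\dots,x_m^{d_m}$.
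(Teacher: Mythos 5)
Your proposal is correct and follows exactly the paper's route: the paper proves the corollary by observing it follows immediately from combining parts (a) and (b) of Proposition \ref{compute}, which is precisely what you do, only spelled out in more detail.
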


\begin{proof}
Immediately follows from Proposition \ref{compute}.
\end{proof}

Now we return to the ideal $\mathcal{I}$ defined in equation \eqref{eq:IJ}. We write $\mathcal{I} = \mathcal{I}_1 + \mathcal{I}_2$, where $\mathcal{I}_1 := \langle x_1^{d_1}, \dots, x_m^{d_m} \rangle$ and $\mathcal{I}_2 := \langle \LT(f_1), \dots, \LT(f_r) \rangle$. By Proposition \ref{compute}, any monomial that does not belong to $\mathcal{I}_1$ will be of the form $x_1^{a_1} \cdots x_m^{a_m}$ where $a_i \le d_i - 1$ for all $i = 1, \dots, m$.  Consequently, the monomials that do not belong to $\mathcal{I}_1$ are, naturally, in one-to-one correspondence with points in $F = [0, d_1 - 1] \times \cdots \times [0, d_m -1]$. In more concrete terms, if $\mathcal{M}_{I_1}$ denotes the set of all monomials that do not belong to $\mathcal{I}_1$, then the map $\phi : \mathcal{M}_{I_1} \to F$ given by $x_1^{a_1} \cdots x_m^{a_m} \mapsto (a_1, \dots, a_m)$ gives such a bijection. In particular, if we assume that $u \ge k$ then the monomials of degree at most $u$ in $\mathcal{M}_{I_1}$ are in one-to-one correspondence with elements of $F$.
Further, if $m,n$ are two monomials in $\mathcal M(I_1)$ we have that $m|n$ if and only if $\phi(m) \le_P \phi(n)$.

In light of Proposition \ref{compute}, a monomial $m \in \mathcal{M}_{I_1}$, is in $\mathcal{I}_2$ if and only if $\phi (m_i) \le_P \phi (m)$ for some $i = 1, \dots, r$. Here $\le_P$ denotes the natural partial ordering on $F$, defined by $$(i_1,\dots,i_m) \le_P (j_1,\dots,j_m) \ \makebox{if and only if} \ i_1 \le j_1, \dots, i_m \le j_m.$$
This leads us to consider the so-called shadow of a collection of elements in $F$:
\begin{definition}
Let $u_1,\dots,u_r \in F$, then we define the shadow of $u_1,\dots,u_r$ in $F$ as
 $$\Delta(u_1,\dots,u_r):=\{u \in F : u_i \le_P u \ \makebox{ for some } \ i=1,\dots,r\}.$$
\end{definition}

Combining the above discussion and Proposition \ref{compute}, we have following:
\begin{equation}\label{hilfunc}
\hf{\mathcal{I}}(u) = |F \setminus \Delta (\phi (\LT(f_1)), \dots, \phi (\LT(f_r)))|, \ \ \ \mathrm{where} \ \ u \ge k.
\end{equation}
Hence, from equations \eqref{ub} and \eqref{hilfunc}, we get that
\begin{equation}\label{numfp}
|Z(f_1, \dots, f_r) \cap \mathcal{A}| \le |F \setminus \Delta (\phi (\LT(f_1)), \dots, \phi (\LT(f_r)))|.
\end{equation}
and hence
\begin{equation}\label{maxfp}
|Z(f_1, \dots, f_r) \cap \mathcal{A}| \le \max \{|F \setminus \Delta (a_1, \dots, a_r)| : a_1,\dots,a_r \in F_{\le d} \},
\end{equation}
where $F_{\le d} := \{(i_1, \dots, i_m) \in F : i_1 + \dots + i_m \le d\}$. Note that for $r=1$, inequality \eqref{numfp} is given in \cite[Cor.13]{GT}.

\section{Generalization of a combinatorial theorem by Wei}

Inequality \eqref{maxfp} gives a way to investigate Question \ref{q:main} using purely combinatorial means. What is needed is to determine the minimum cardinality of the shadow $\Delta (b_1, \dots, b_r)$ given $r$ distinct elements $b_1,\dots,b_r \in F_{\le d}$. 
In this section we will determine this minimum cardinality. Our approach is to generalize \cite{W}, where the case $A_1=\cdots = A_m=\FF_2$ was settled. 
It should be noted that we actually found an error in the proof of \cite[Lemma 6]{W}. 
This has some impact, since \cite[Lemma 6]{W} also was used in \cite{HP} to deal with the case $A_1=\cdots = A_m=\Fq$. 
Fortunately, the material in this section (notably Theorem \ref{wei}) implies that Lemma 6 in \cite{W} is correct and thus fully justifies its use in \cite{HP}. 
  
For the convenience of the reader let us recap the notation we have used so far as well as introduce some further notation that we will use in this section.

\begin{notation} 
{\ }
\begin{enumerate}
\item[(a)] Let $d_1 \le \dots \le d_m$ be integers and $k = (d_1 - 1) + \dots + (d_m -1)$.
\item[(b)] $F := \{0,\dots, d_1 - 1\} \times \dots \times \{0, \dots, d_m - 1\}$.
\item[(c)] For $a=(i_1,\dots,i_m) \in F$, define $\deg(a):=i_1+\cdots+i_m$. 
\item[(c)] For $u \le k$, define $F_u := \{a \in F : \deg(a) = u\}$ and $F_{\le u} = \{a \in F : \deg(a) \le u\}$.
\item[(d)] Let $S \subset F_u$ and $|S| = r$. Denote by $L(S)$ the set of first $r$ elements of $F_u$ in descending lexicographic order.
\item[(e)] $(i_1,\dots,i_m) \le_P (j_1,\dots,j_m) \ \makebox{if and only if} \ i_1 \le j_1, \dots, i_m \le j_m.$
\item[(f)] Let $u \le v \le k$ and $S \subseteq \Fq$.  Define $\Delta (S) = \{ a \in F : \exists b \in S, b \le_P a \}$ and $\Delta_v (S) = \Delta (S) \cap F_v$.
\item[(g)] For $S \subset F_{\le u}$ with $|S| = r$, denote by $M(S)$ the first $r$ elements of $F_{\le u}$ in descending lexicographic order.
\end{enumerate}
\end{notation}

Like in \cite{HP}, the following theorem due to Clements and Lindstr\"{o}m, will be an essential combinatorial tool. 

\begin{theorem}[Cor.1 \cite{CL}]\label{CLT}
For $u \le k$ let $S \subseteq F_u$. Then $\Delta_{u+1} (L(S)) \subseteq L (\Delta_{u+1} (S))$.
\end{theorem}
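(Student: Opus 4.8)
The statement is, in different notation, Corollary 1 of \cite{CL}, so the plan is to deduce it from there rather than reprove it. First I would note that $(F,\le_P)$ is the product of the $m$ finite chains $\{0,\dots,d_i-1\}$ and that $\deg$ is its rank function. Clements and Lindstr\"om show that a product of finite chains, with the factors listed in the prescribed order of their lengths, is a ``Macaulay poset'': initial segments of each rank in the relevant lexicographic-type order minimise the shadow, and, more precisely, the shadow of such an initial segment is again an initial segment. Translating their ``shadow'' into our $\Delta_{u+1}$, their ``rank-$u$ initial segment'' into $L(S)$, and their ``rank'' into $\deg$, this last assertion is exactly $\Delta_{u+1}(L(S))\subseteq L(\Delta_{u+1}(S))$.

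The real work — and the only obstacle I anticipate — is verifying that the conventions of \cite{CL} match those fixed in our Notation. Two points must be checked. First, the direction of the shadow: since $\Delta(S)$ is an up-set, $\Delta_{u+1}$ is the \emph{upper} shadow, one rank up; if \cite{CL} work with the lower shadow one passes between the two via the order-reversing involution $c(i_1,\dots,i_m)=(d_1-1-i_1,\dots,d_m-1-i_m)$, which maps $F_u$ onto $F_{k-u}$, swaps $\le_P$ with $\ge_P$ (hence upper with lower shadows) and reverses lexicographic order, and one then checks that descending lexicographic order on $F_u$ is carried by $c$ to precisely the order in which \cite{CL} take initial segments at rank $k-u$. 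Second, the ordering of the factors: the standing hypothesis $d_1\le\dots\le d_m$ — equivalently $d_m-1\ge\dots\ge d_1-1$ after applying $c$ — must be the monotonicity condition \cite{CL} require; a small check, e.g.\ $m=2$, $d_1=2$, $d_2=4$, where the descending-lexicographic initial segment of $F_2$ has a strictly smaller upper shadow than its terminal segment, pins down the correct direction. Getting this dictionary — including the behaviour of lexicographic ties — exactly right is where the care is needed; once it is in place nothing further is required.

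If one preferred a self-contained argument, the route would be the standard compression method: for $i<j$ define a ``compression towards coordinate $i$'' operator $C_{ij}$ on subsets of $F_u$, prove $|\Delta_{u+1}(C_{ij}(S))|\le|\Delta_{u+1}(S)|$ and that $C_{ij}$ cannot move an element out of a descending-lexicographic initial segment, and show that iterating the $C_{ij}$ terminates at $L(S)$ (a monovariant such as the sum of the lexicographic ranks of the members of $S$ strictly decreases under a non-trivial $C_{ij}$). The technical heart there — making the shadow inequality survive compression in the \emph{bounded}-multiplicity setting, together with the termination argument — is exactly what \cite{CL} carry out, which is why directly invoking their Corollary 1 is the cleaner choice.
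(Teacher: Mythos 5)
Your proposal is essentially the paper's own treatment: the paper states this result as Corollary~1 of Clements--Lindstr\"om without further proof, and your plan amounts to invoking that corollary together with a careful dictionary (upper versus lower shadow via the complementation involution, the ordering of the chain lengths $d_1\le\dots\le d_m$, and descending lexicographic initial segments), which is exactly the translation implicit in the paper's citation. The convention checks you flag are the right ones, so nothing further is needed.
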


\begin{corollary}\label{cl}
For $u \le v \le k$ and $S \subset F_u$ we have $\Delta_v(L(S)) \subseteq \L (\Delta_v(S))$. In particular, $|\Delta_v (L(S))| \le |\Delta_v(S)|$.
\end{corollary}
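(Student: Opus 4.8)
The plan is to deduce Corollary \ref{cl} from Theorem \ref{CLT} by iterating the one-step shadow inequality. Recall that Theorem \ref{CLT} only tells us about passing from level $u$ to level $u+1$, namely $\Delta_{u+1}(L(S)) \subseteq L(\Delta_{u+1}(S))$ for $S \subseteq F_u$. So the first observation is that the iterated shadow decomposes level by level: for $S \subseteq F_u$ and $u \le v \le k$ we have $\Delta_v(S) = \Delta_v(\Delta_{u+1}(S))$ when $v \ge u+1$, and more generally $\Delta_{w}(S)$ for $w > u$ equals the $w$-th level of the shadow of $\Delta_{u+1}(S) \subseteq F_{u+1}$. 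This holds because $b \le_P a$ with $\deg b = u$, $\deg a = w$ forces the existence of an intermediate element $c$ with $b \le_P c \le_P a$ and $\deg c = u+1$ (just increase one coordinate of $b$ that is strictly less than the corresponding coordinate of $a$; such a coordinate exists since $w > u$, and the resulting $c$ still lies in $F$ because its coordinates are bounded by those of $a \in F$).

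Next I would run the induction on $v - u$. The base case $v = u$ is trivial since $\Delta_u(S) = S = L(S)$ when $S$ has the form... wait, actually $\Delta_u(S) = S$ but $\Delta_u(L(S)) = L(S)$, and $L(L(S)) = L(S)$, so the inclusion $\Delta_u(L(S)) \subseteq L(\Delta_u(S))$ reads $L(S) \subseteq L(S)$, which holds. For the inductive step, assume the result for $v - u$ and prove it for $(v+1) - u$. Write $T := \Delta_{u+1}(S) \subseteq F_{u+1}$ and $T' := \Delta_{u+1}(L(S)) \subseteq F_{u+1}$. By Theorem \ref{CLT} we have $T' \subseteq L(T)$. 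Now $\Delta_{v+1}(L(S)) = \Delta_{v+1}(T')$ by the level-decomposition above, and since $T' \subseteq L(T)$ and the shadow operator is monotone with respect to inclusion, $\Delta_{v+1}(T') \subseteq \Delta_{v+1}(L(T))$. Applying the induction hypothesis to the set $T \subseteq F_{u+1}$ (with the pair of levels $u+1 \le v+1$, whose difference is $v - u$) gives $\Delta_{v+1}(L(T)) \subseteq L(\Delta_{v+1}(T)) = L(\Delta_{v+1}(S))$, again using the level decomposition. Chaining these inclusions yields $\Delta_{v+1}(L(S)) \subseteq L(\Delta_{v+1}(S))$, completing the induction.

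Finally, the cardinality statement is immediate: for any $W \subseteq F_v$, the set $L(W)$ is by definition the first $|W|$ elements of $F_v$ in descending lexicographic order, so $|L(W)| = |W|$; hence from $\Delta_v(L(S)) \subseteq L(\Delta_v(S))$ we get $|\Delta_v(L(S))| \le |L(\Delta_v(S))| = |\Delta_v(S)|$.

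I expect the main obstacle to be the clean bookkeeping of the level-decomposition lemma $\Delta_w(S) = \Delta_w(\Delta_{u+1}(S))$ for $w \ge u+1$ — in particular verifying that an intermediate element $c$ of degree $u+1$ always exists between $b$ and $a$ and stays inside the box $F$. This is the only place where anything beyond formal manipulation happens, and it must be stated carefully because $F$ is a bounded box rather than all of $\ZZ_{\ge 0}^m$; the point is simply that $c$ can be chosen coordinatewise below $a$, so boundedness is automatic. Everything else is a monotonicity-plus-induction argument, and Theorem \ref{CLT} supplies exactly the single-step input the induction needs. (Note: in the corollary statement, ``$\L$'' should read ``$L$''.)
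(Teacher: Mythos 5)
Your proof is correct and follows essentially the same route as the paper: induction on $v-u$, using Theorem~\ref{CLT} as the single-step input together with monotonicity of the shadow and the decomposition $\Delta_w(S)=\Delta_w(\Delta_{u+1}(S))$. The only difference is that you spell out the level-decomposition step (existence of the intermediate element of degree $u+1$ inside the box $F$), which the paper leaves implicit in its $v=u+2$ computation.
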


\begin{proof}
For $u=v$ there is nothing to prove. The case $v = u+1$ follows from Theorem \ref{CLT}. If $v=u+2$ then we have,
\begin{align*}
\Delta_{u+2} (L(S)) &= \Delta_{u+2} (\Delta_{u+1} (L(S))) \\
&\subseteq \Delta_{u+2} (L (\Delta_{u+1} (S))) \subseteq L (\Delta_{u+2} (\Delta_{u+1} (S))) = L (\Delta_{u+2} (S)).
\end{align*}
The rest of the proof follows by induction on $v-u$.
\end{proof}

\begin{corollary}\label{ccl}
For $u \le k$ and $S \subset F_u$, we have $|\Delta (L(S))| \le |\Delta (S)|$.
\end{corollary}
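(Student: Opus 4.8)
The plan is to reduce the statement to Corollary \ref{cl} by slicing the shadow into its graded pieces. First I would observe that since $S \subseteq F_u$ and $b \le_P a$ forces $\deg(b) \le \deg(a)$, every element of $\Delta(S)$ has degree at least $u$; the same applies to $L(S)$, which by definition is a subset of $F_u$. Hence we have the disjoint decompositions
\begin{equation*}
\Delta(S) = \bigsqcup_{v=u}^{k} \Delta_v(S) \qquad \text{and} \qquad \Delta(L(S)) = \bigsqcup_{v=u}^{k} \Delta_v(L(S)),
\end{equation*}
where $\Delta_v(S) = \Delta(S) \cap F_v$ as in the notation above, and the unions are disjoint because distinct degrees give disjoint fibers $F_v$.

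Next I would take cardinalities and apply Corollary \ref{cl} termwise. For each $v$ with $u \le v \le k$, Corollary \ref{cl} gives $|\Delta_v(L(S))| \le |\Delta_v(S)|$. Summing over $v$ from $u$ to $k$ yields
\begin{equation*}
|\Delta(L(S))| = \sum_{v=u}^{k} |\Delta_v(L(S))| \le \sum_{v=u}^{k} |\Delta_v(S)| = |\Delta(S)|,
\end{equation*}
which is exactly the claim.

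There is essentially no serious obstacle here: the result is a purely bookkeeping consequence of the graded version already established in Corollary \ref{cl}. The only points requiring a line of justification are that the shadow of a set living in a single slice $F_u$ is concentrated in degrees $\ge u$, and that the graded pieces partition the shadow; both are immediate from the definitions of $\le_P$ and $\deg$. So the proof is short and I would present it in just a few lines.
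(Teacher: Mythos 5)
Your proof is correct and is essentially identical to the paper's own argument: both decompose the shadow into the graded slices $\Delta_v$ for $u \le v \le k$, apply Corollary \ref{cl} to each slice, and sum. The extra justification you give for the disjointness of the decomposition is fine but not needed beyond the definitions.
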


\begin{proof}
Note that $|\Delta (S)| = \displaystyle{\sum_{v=u}^k} |\Delta_v (S)| \ge \displaystyle{\sum_{v=u}^k} |\Delta_v (L(S))| = |\Delta (L(S))|$. The inequality follows from Corollary \ref{cl}.
\end{proof}

\begin{lemma}\label{proplex}
Let $1 \le v \le k$ and write $u = v-1$. Choose $y \in F_v$ and consider $a:= \max_{lex} \{f \in F_u : f \le_{lex} y\}$. Then $a \le_P y$.
\end{lemma}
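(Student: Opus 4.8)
The plan is to identify $a$ explicitly and then read off $a \le_P y$ from its description. Write $y = (y_1,\dots,y_m)$. Since $v \ge 1$ the vector $y$ is nonzero, so $j := \max\{\, i : y_i \ge 1 \,\}$ is well defined, and $y_{j+1} = \cdots = y_m = 0$. I claim that
$$a = a' := (y_1,\dots,y_{j-1},\, y_j - 1,\, 0,\dots,0).$$
First I would check that $a'$ is a legitimate competitor: its coordinates lie in the required ranges (in particular $0 \le y_j - 1 \le d_j - 1$), and $\deg(a') = (y_1 + \cdots + y_j) - 1 = v - 1 = u$ because $\deg(y) = v$ and the tail of $y$ vanishes; also $a' \le_{lex} y$, since $a'$ agrees with $y$ on the first $j-1$ coordinates and $a'_j = y_j - 1 < y_j$. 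In particular $\{\, f \in F_u : f \le_{lex} y \,\}$ is nonempty, so $a$ is well defined, and it remains to show that every $f \in F_u$ with $f \le_{lex} y$ satisfies $f \le_{lex} a'$.

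To prove this, I would take such an $f$ and split according to where $f$ first deviates from $y$. If $f$ and $y$ differ at some index among $1,\dots,j-1$, let $\ell$ be the least such index; then $f \le_{lex} y$ forces $f_\ell < y_\ell$, while $f_i = y_i = a'_i$ for $i < \ell$ and $a'_\ell = y_\ell$, so $f <_{lex} a'$. Otherwise $f_i = y_i = a'_i$ for all $i \le j-1$, and comparing degrees gives $f_j + \cdots + f_m = \deg(f) - (y_1 + \cdots + y_{j-1}) = (v-1) - (y_1 + \cdots + y_{j-1}) = y_j - 1$; since the coordinates of $f$ are nonnegative, this forces $f_j \le y_j - 1 = a'_j$, with equality only if $f_{j+1} = \cdots = f_m = 0$, i.e. only if $f = a'$. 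In all cases $f \le_{lex} a'$, so $a = a'$.

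Finally, $a = a' \le_P y$ is immediate from the formula for $a'$: $a'_i = y_i$ for $i < j$, $a'_j = y_j - 1 < y_j$, and $a'_i = 0 \le y_i$ for $i > j$. The only step requiring any care is the second case of the middle paragraph, where one must do the degree bookkeeping to see both that $f_j$ cannot exceed $y_j - 1$ and that equality forces $f = a'$; the remainder is routine manipulation of the lexicographic order.
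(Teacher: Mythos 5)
Your proof is correct, and it takes a genuinely different route from the paper. The paper argues by contradiction: it writes $y-a=(0,\dots,0,c_i,\dots,c_m)$ with $c_i\ge 0$, and whenever some later coordinate is negative it performs an exchange (replacing $a$ by $a+\e_i-\e_j$, or by $y-\e_n$ in a degenerate subcase) to produce an element of $F_u$ strictly between $a$ and $y$ in lexicographic order, contradicting maximality; this requires three subcases and careful checks that the exchanged tuple stays inside $F$. You instead identify the maximizer in closed form, $a=(y_1,\dots,y_{j-1},y_j-1,0,\dots,0)$ with $j$ the last nonzero coordinate of $y$, verify it lies in $\{f\in F_u: f\le_{lex} y\}$, and show it lexicographically dominates every element of that set by a short two-case comparison (first deviation before index $j$, versus agreement up to $j-1$ plus degree bookkeeping); the conclusion $a\le_P y$ is then immediate from the formula. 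Your verification is sound — in particular the degree count $f_j+\cdots+f_m=y_j-1$ correctly forces $f_j\le y_j-1$ with equality only for $f=a$, and the case $v=1$, $u=0$ is covered. What your approach buys is a shorter, constructive argument with no exchange manipulations, plus extra information: an explicit description of $a$ (the degree drop is absorbed entirely in the last nonzero coordinate of $y$), which would also make the iterated statement in Remark \ref{iter} explicit. What the paper's approach buys is that one never needs to guess the formula for $a$: the exchange argument works directly from maximality, in the spirit of Macaulay--Clements--Lindstr\"om style proofs, at the cost of a longer case analysis.
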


\begin{proof}
Write $y - a = (0, \dots, 0, c_i, c_{i+1}, \dots, c_m)$, where $1 \le i \le m$ and $c_i \in \ZZ$. Since $a \le_{lex} y$, we have $c_i \ge 0$. We divide the proof in two cases.

\textbf{Case 1:} If $c_j \ge 0$ for all $j > i$ then we have $a  \le_P y$ and we are done.

\textbf{Case 2:} There exists $\ell > i$ such that $c_{\ell} < 0$. Choose $j := \min \{\ell : c_{\ell} < 0\}$.  Note that, by definition of $j$, we have $c_{\ell} \ge 0$ for all $i \le \ell < j$.

\textit{Subcase $1$:} Suppose $c_i >1$.  Let $\tilde{a}:= a + \e_i - \e_j$, where $\e_{s}$ denotes the $m$ tuple with $1$ in the $s$-th coordinate and zeroes elsewhere.  It follows trivially that $a <_{lex} \tilde{a}$. Moreover, the first nonzero coordinate in $y - \tilde{a}$ is $c_i - 1$ which is positive. This implies that $\tilde{a} <_{lex} y$ and hence
\begin{equation}\label{eq:max}
a <_{lex} \tilde{a} <_{lex} y.
\end{equation}
Moreover, $\deg \tilde{a} = u$. To see that $\tilde{a} \in F$, we observe that
\begin{enumerate}
\item $0 \le a_i \le a_i + 1 = y_i - c_i + 1 < y_i \le d_i - 1$ since $c_i > 1$ and
\item $d_j - 1\ge a_j \ge a_j - 1 = y_j - c_j -1 \ge y_j \ge 0$ since $c_j < 0$.
\end{enumerate}
Hence, $\tilde{a} \in F_u$ contradicting the maximality of $a$.

\textit{Subcase $2$:} Assume that $c_i =1$ and $c_{\ell} > 0$ for some $i < \ell < j$. It is easy to see that the same $\tilde{a}$ as in subcase 1 satisfies the inequality \eqref{eq:max} which again violates the maximality of $a$.

\textit{Subcase $3$:} Assume that $c_i =1$ and $c_{\ell} = 0$ for all $i < \ell < j$. Since $c_j < 0$ and $c_i = 1$ with $\sum_{\ell = i}^m c_{\ell} = v - u = 1$, there exists $n > j$ such that $c_n >0$. Let $\tilde{a} = y - \e_n$. Clearly $\tilde{a} \le_{lex} y$. Also, the first nonzero coordinate of $\tilde{a} - a$ is $c_i = 1 > 0$. Thus $\tilde{a}$ satisfies the inequality \eqref{eq:max} and clearly $\tilde{a} \in F_u$ since $c_n > 0$ which contradicts the maximality of $a$.
\end{proof}

\begin{remark}\label{iter}
Note that, one could derive the conclusion of Lemma \ref{proplex} for any $0 <u < v \le k$ by applying the lemma iteratively.
\end{remark}

\begin{lemma}\label{claim}
Assume that $1 \le u \le v \le k$. Let $M(r)$ denote the first $r$ elements of $F_{\le v}$ in descending lexicographic order. Let $M_u = M(r) \cap F_u$ and $r_u := |M_u|$.
Then  $\Delta_v (M_u) \subseteq M_v \subseteq \Delta_v (M_u^*) $, where $M_u^*$ consists of the first $r_u+1$ elements of $F_u$ in descending lexicographic order for $1 \le u \le v$.
\end{lemma}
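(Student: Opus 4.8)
The plan is to prove the two inclusions $\Delta_v(M_u) \subseteq M_v$ and $M_v \subseteq \Delta_v(M_u^*)$ separately, using throughout two elementary facts: that $a \le_P b$ implies $a \le_{lex} b$, and that a set consisting of the ``first $r$ elements in descending lexicographic order'' of a finite totally ordered set is upward closed for $\le_{lex}$, i.e. if $x \ge_{lex} y$, $y$ is among the first $r$ elements, and $x$ lies in the same set, then $x$ is among the first $r$ elements too.

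For the first inclusion I would take $c \in \Delta_v(M_u)$, so $c \in F_v$ and $a \le_P c$ for some $a \in M_u$; then $a \le_{lex} c$, and since $a \in M_u \subseteq M(r)$ is among the first $r$ elements of $F_{\le v}$ while $c \in F_v \subseteq F_{\le v}$, upward-closedness forces $c \in M(r)$, hence $c \in M(r) \cap F_v = M_v$.

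For the second inclusion, fix $b \in M_v$ and set $a := \max_{lex}\{f \in F_u : f \le_{lex} b\}$; this set is nonempty because decreasing coordinates of $b$ produces an element of $F_u$ that is $\le_P b$. By Lemma \ref{proplex}, iterated as in Remark \ref{iter} when $u < v - 1$ (and trivially when $u = v$, where $a = b$), one has $a \le_P b$. It then remains to check that $a$ lies among the first $r_u + 1$ elements of $F_u$ in descending lexicographic order, i.e. that $|\{f \in F_u : f >_{lex} a\}| \le r_u$. For each such $f$, maximality of $a$ among the elements of $F_u$ lexicographically below $b$ forces $f >_{lex} b$; since $b \in M(r)$ and $f \in F_u \subseteq F_{\le v}$, upward-closedness gives $f \in M(r)$, hence $f \in M(r) \cap F_u = M_u$. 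So $\{f \in F_u : f >_{lex} a\} \subseteq M_u$ has at most $r_u$ elements, whence $a \in M_u^*$ and, together with $a \le_P b$, we get $b \in \Delta_v(M_u^*)$.

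The point requiring the most care is the assertion $a \le_P b$ for the lexicographically largest $a \in F_u$ below $b$: this is exactly the content of Lemma \ref{proplex} combined with Remark \ref{iter}, and it is what makes the lexicographic maximum over the $\le_{lex}$-predecessors of $b$ in $F_u$ agree with the lexicographic maximum over the $\le_P$-predecessors, which the counting step above relies on. A small loose end to dispatch is the degenerate case $r_u = |F_u|$, in which $M_u^* = F_u$ and the second inclusion is immediate since $a \in F_u$; I would either note this explicitly or assume $r_u < |F_u|$ so that $M_u^*$ is well defined.
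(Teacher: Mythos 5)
Your proof is correct and follows essentially the same route as the paper: the same upward-closedness argument for $\Delta_v(M_u) \subseteq M_v$, and for the second inclusion the same choice of $a = \max_{lex}\{f \in F_u : f \le_{lex} b\}$ together with Lemma \ref{proplex} and Remark \ref{iter} to get $a \le_P b$. Your direct count of $\{f \in F_u : f >_{lex} a\}$ is just a rephrasing of the paper's contradiction argument comparing $a$ with the $(r_u+1)$-th element of $F_u$, and your remarks on nonemptiness and the degenerate case $r_u = |F_u|$ are minor points the paper leaves implicit.
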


\begin{proof}
If $y \in \Delta_v (M_u)$, then there exists $i \in M_u$ and $a \in F_{v-u}$ such that $y = i+a$. Hence, $y \ge_P i$ which implies $y \ge_{lex} i$. The fact $i \in M(r)$ thus implies that $y \in M(r)$. Also, $y \in F_v$. Thus, $y \in M(r) \cap F_v = M_v$. This proves the first inclusion.

Let $y \in M_v$. Define $a:= \max_{lex} \{f \in F_u : f \le_{lex} y\}$. Remark \ref{iter} implies that $a \le_P y$. If $a \in M_u$ then we are done. So we may assume that $a \not\in M_u$. Note that $M_u$ consists of the first $r_u$ elements of $F_u$ in descending lexicographic order. This implies that $a \le_{lex} f_{r_u + 1}$, where $f_1, \dots, f_{r_u + 1}$ denote the first $r_u + 1$ elements of $F_u$ in descending lexicographic order. If $a = f_{r_u + 1}$ then  $a \in M_u^*$ and hence $y \in \Delta_v (M_u^*)$. Now suppose, if possible, that $a <_{lex} f_{r_u + 1}$. By maximality of $a$ we have $y <_{lex} f_{r_u + 1}$. However, since $y \in M(r)$, we have $f_{r_u + 1} \in M(r)$ (by definition of $M(r)$) which implies that $f_{r_u + 1} \in M_u$. This is a contradiction since $|M_u| = r_u$. This completes the proof.
\end{proof}

\begin{lemma}\label{claim2}
For $r \ge 1$ and $v \le k$, let $M(r)$ denote the set of first $r$ elements of $F_{\le v}$ in descending lexicographic order. For $u \le v$, we write $M_u = M(r) \cap F_u$. Then $|\Delta (M(r))| = r - |M_v| + |\Delta (M_v)|$.
\end{lemma}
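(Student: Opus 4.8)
The plan is to analyze the shadow $\Delta(M(r))$ by decomposing it into its graded pieces $\Delta_w(M(r)) = \Delta(M(r)) \cap F_w$ for $v \le w \le k$, and to show that only the top piece $M_v = M(r) \cap F_v$ fails to behave additively. The key observation driving everything is Lemma \ref{claim}: the generators of $M(r)$ living in degrees $u < v$ have their degree-$v$ shadow already contained in $M_v$. So below degree $v$, the generators in lower degrees contribute nothing new to the shadow that is not already captured by $M_v$, and from degree $v$ onward the shadow is governed entirely by $M_v$.

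\begin{proof}
Write $\Delta_w(M(r)) = \Delta(M(r)) \cap F_w$ for each $w$ with $v \le w \le k$, so that
\begin{equation*}
|\Delta(M(r))| = \sum_{w=v}^{k} |\Delta_w(M(r))|.
\end{equation*}
We analyze each piece. First, in degree $w = v$ we claim $\Delta_v(M(r)) = M_v$. Indeed, $M_v \subseteq \Delta_v(M(r))$ is clear since $M_v = M(r) \cap F_v$ and each element of $M_v$ lies in its own shadow. Conversely, if $y \in \Delta_v(M(r))$ then $b \le_P y$ for some $b \in M(r)$, say $b \in M_u$ for some $u \le v$; if $u = v$ then $y = b \in M_v$, and if $u < v$ then $y \in \Delta_v(M_u) \subseteq M_v$ by the first inclusion of Lemma \ref{claim}. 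Hence $\Delta_v(M(r)) = M_v$, contributing $|M_v|$.

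Next, fix $w$ with $v < w \le k$. I claim $\Delta_w(M(r)) = \Delta_w(M_v)$. The inclusion $\Delta_w(M_v) \subseteq \Delta_w(M(r))$ is immediate since $M_v \subseteq M(r)$. For the reverse, take $y \in \Delta_w(M(r))$, so $b \le_P y$ for some $b \in M(r)$, say $b \in M_u$ with $u \le v$. If $u = v$ then $b \in M_v$ and $y \in \Delta_w(M_v)$. If $u < v$, then since $\deg(y) = w > v > u$, we can find (by Remark \ref{iter}) an element $a' \in F_v$ with $b \le_P a' \le_P y$; explicitly, one moves up from $b$ toward $y$ in $F$ coordinatewise, picking up degree one step at a time, which is possible since $b \le_P y$ and $\deg(y) = w \ge v$. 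Then $a' \in \Delta_v(M_u) \subseteq M_v$ by Lemma \ref{claim}, and $a' \le_P y$ gives $y \in \Delta_w(M_v)$. Thus $\Delta_w(M(r)) = \Delta_w(M_v)$ for all $w > v$.

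Combining the two cases,
\begin{equation*}
|\Delta(M(r))| = |M_v| + \sum_{w=v+1}^{k} |\Delta_w(M_v)| = |M_v| + \bigl( |\Delta(M_v)| - |\Delta_v(M_v)| \bigr) = |M_v| + |\Delta(M_v)| - |M_v|,
\end{equation*}
since $\Delta_v(M_v) = M_v$. Hence $|\Delta(M(r))| = |\Delta(M_v)| = r - |M_v| + |\Delta(M_v)|$ once we account for the elements of $M(r)$ in degrees below $v$: the above equals $|\Delta(M_v)|$, but $\Delta(M(r))$ also contains all of $M(r)$, and the elements of $M(r)$ in degrees $< v$ number $r - |M_v|$ and are disjoint from $\Delta(M_v) \subseteq F_{\ge v}$. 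Therefore $|\Delta(M(r))| = (r - |M_v|) + |\Delta(M_v)|$, as claimed.
\end{proof}

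**The main obstacle** I expect is the passage in the second paragraph from a lower-degree generator $b \in M_u$ to an intermediate element $a' \in F_v$ with $b \le_P a' \le_P y$: one must be careful that moving up in $F$ respects the box constraints $a'_i \le d_i - 1$, but this is automatic because $a'$ is squeezed between $b$ and $y$, both of which lie in $F$. The other subtlety is keeping the bookkeeping straight between "elements of $M(r)$ of degree below $v$" (which are in $\Delta(M(r))$ trivially but not in $\Delta(M_v)$) and the graded shadow pieces in degrees $\ge v$; the formula $r - |M_v| + |\Delta(M_v)|$ precisely separates these two contributions.
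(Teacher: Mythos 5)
Your decomposition at degree $v$ and your use of Lemma \ref{claim} to show that the shadow in degrees $\ge v$ is governed by $M_v$ alone (i.e.\ $\Delta_v(M(r))=M_v$ and $\Delta_w(M(r))=\Delta_w(M_v)$ for $w>v$, via an interpolated $a'\in F_v$ with $b\le_P a'\le_P y$) are correct and match the structure of the paper's argument. The gap is in the final bookkeeping for degrees below $v$. What the equality requires there is $\Delta(M(r))\cap F_{<v}=M(r)\setminus M_v$, but you only prove the inclusion $\supseteq$: you note that $\Delta(M(r))$ contains the $r-|M_v|$ elements of $M(r)$ of degree $<v$ and that these are disjoint from $\Delta(M_v)$. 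That yields only $|\Delta(M(r))|\ge r-|M_v|+|\Delta(M_v)|$; the ``Therefore'' before your claimed equality does not follow. The missing direction --- that no element of degree $<v$ lies in the shadow unless it already belongs to $M(r)$ --- is exactly what the paper's proof supplies, via the iterative claim $\Delta_u(M(r)\setminus M_v)=M_u$ for all $u\le v-1$. Note also that the upper bound $|\Delta(M(r))|\le r-|M_v|+|\Delta(M_v)|$ is precisely the direction used later in the proof of Theorem \ref{wei}, so it cannot be omitted. (Relatedly, your opening display $|\Delta(M(r))|=\sum_{w=v}^{k}|\Delta_w(M(r))|$ is false whenever $M(r)$ meets $F_{<v}$, and the muddled chain ``$|\Delta(M(r))|=|\Delta(M_v)|=r-|M_v|+|\Delta(M_v)|$'' literally asserts $r=|M_v|$ before being walked back.)

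The fix is short and is the same lexicographic argument as in the first inclusion of Lemma \ref{claim}: if $y\in\Delta(M(r))$ with $\deg y<v$, pick $b\in M(r)$ with $b\le_P y$; then $b\le_{lex} y$, and since $y\in F_{\le v}$ and $M(r)$ consists of the first $r$ elements of $F_{\le v}$ in descending lexicographic order, it follows that $y\in M(r)$. Hence $\Delta(M(r))\cap F_{<v}=M(r)\cap F_{<v}$, which has exactly $r-|M_v|$ elements; combined with your identification of $\Delta(M(r))\cap F_{\ge v}$ with $\Delta(M_v)$, this gives the stated equality. With that one step added (and the first display corrected to account for degrees below $v$), your proof is complete and essentially the paper's.
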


\begin{proof}
It follows from Lemma \ref{claim} that,
\begin{equation}\label{bigcup}
\bigcup_{u < v} \Delta_v (M_u) \subset M_v.
\end{equation}
Now,
$$|\Delta(M(r))| = |\Delta (M(r)) \cap F_{<v}| +  |\Delta (M(r)) \cap F_{\ge v}| =  |\Delta (M(r) \setminus M_v) \cap F_{<v}| + |\Delta (M_v)|.$$
Note that, $M(r) \setminus M_v$ consists of first $r - |M_v|$ elements of $F_{\le v-1}$ in descending lexicographic order. Hence,
$\Delta_{v-1} (M(r) \setminus M(v)) = M_{v-1}$ by applying \eqref{bigcup} to $M(r) \setminus M_v$ (on the $(v-1)$-th level). Reasoning iteratively,
$\Delta_{u} (M(r) \setminus M(v)) = M_{u}$ for all $u \le v-1$. This proves the lemma.
\end{proof}

The following theorem is a generalization of \cite[Lemma 6]{W} and our proof approach is similar as in Wei's paper. 
However, as mentioned before, Wei's somewhat terse proof contains a mistake which is why we have chosen to give a fully detailed proof of Theorem \ref{wei}.

\begin{theorem}\label{wei}
For $v \le k$, let $S \subseteq F_{\le v}$  with $|S| = r$. Then $|\Delta  (M (r))| \le |\Delta (S)|$, where, as before, $M(r)$ denotes the first $r$ elements of $F_{\le v}$ in descending lexicographic order.
\end{theorem}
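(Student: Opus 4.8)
The plan is to induct on $v$, the bound on the total degree. For $v=0$ the set $F_{\le 0}=\{(0,\dots,0)\}$ has a single element and the statement is trivial. For the inductive step, fix $S\subseteq F_{\le v}$ with $|S|=r$, and let $S_u := S\cap F_u$ and $r_u := |S_u|$ for $u=0,\dots,v$. The key reduction is a ``compression to the top level'' argument: one shows that among all $r$-subsets of $F_{\le v}$, the minimum of $|\Delta(S)|$ is attained by a set all of whose elements lie in the top level $F_v$ as much as possible — more precisely, by an initial segment. To make this precise I would first observe, using that $\Delta$ is defined levelwise via $\Delta_w$, that
\[
|\Delta(S)| = \sum_{w=0}^{k} |\Delta_w(S)| = \Big|\bigcup_{u\le v}\Delta_v(S_u)\Big| \;+\; \Big(\text{contributions from levels } \ne v\Big),
\]
and then analyze how replacing each $S_u$ by the lex-initial segment $L(S_u)\subseteq F_u$ affects the shadow.

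The heart of the argument is: replacing $S$ by $S':=\bigcup_u L(S_u)$ does not increase $|\Delta(S)|$. This should follow from Corollary \ref{ccl} applied level by level — for each fixed $u$, $|\Delta(L(S_u))|\le |\Delta(S_u)|$ — but the subtlety is that the shadows of the different levels overlap in $F_{\ge v}$, so one cannot simply sum these inequalities. Here I would use the Clements–Lindström nesting from Corollary \ref{cl}, which says $\Delta_v(L(S_u))\subseteq L(\Delta_v(S_u))$: since lex-initial segments of $F_v$ are totally ordered by inclusion, the sets $\{L(\Delta_v(S_u))\}_u$ are nested, hence $\big|\bigcup_u\Delta_v(L(S_u))\big| = \max_u|\Delta_v(L(S_u))| \le \max_u|\Delta_v(S_u)| \le |\bigcup_u\Delta_v(S_u)|$, and similarly on higher levels $w>v$. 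This gives $|\Delta(S')|\le|\Delta(S)|$ with $S'$ now a union of lex-initial segments on each level.

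Having reduced to $S'=\bigcup_u L(S_u)$, the next step is to move mass up to level $v$: I claim that if $r_{v}<|F_v|$ and some lower level is nonempty, one can transfer one element from the top nonempty lower level into $F_v$ without increasing the shadow. Concretely, compare $S'$ with the set obtained by deleting the lex-last element of the top nonempty $L(S_u)$ (with $u<v$) and inserting the next lex element of $F_v$; one checks that the shadow of the new configuration is contained in that of $S'$, using Lemma \ref{claim} and Lemma \ref{claim2}, which precisely describe $\Delta(M(r))$ in terms of $|M_v|$ and $\Delta(M_v)$. Iterating, we reach a configuration that is an initial segment of $F_{\le v}$ in descending lex order with shadow no larger than $|\Delta(S)|$; but that configuration is exactly $M(r)$. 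Combining, $|\Delta(M(r))|\le|\Delta(S')|\le|\Delta(S)|$, as desired.

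The main obstacle I anticipate is the overlap bookkeeping between levels: the naive ``apply Corollary \ref{ccl} to each level and add up'' fails because the shadows $\Delta(S_u)$ for different $u$ are not disjoint on levels $\ge v$, and getting a clean inequality requires genuinely using the \emph{nestedness} of lex-initial segments (not merely their cardinalities) — this is exactly the point where, per the remark before the theorem, Wei's terse argument went wrong. A secondary technical point is ensuring the ``move mass to the top level'' step is valid when $|F_v|$ is already saturated or when the degree constraint $v\le k$ is tight; here one falls back on the structural Lemmas \ref{claim} and \ref{claim2} rather than ad hoc element-pushing, and on Lemma \ref{proplex} to guarantee that the lex-maximal element of $F_u$ below a given $y\in F_v$ is actually $\le_P y$, which is what makes the insertion/deletion operations respect $\Delta$.
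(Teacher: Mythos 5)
There is a genuine gap, and it sits exactly at the heart of your argument. The ``move mass up to level $v$'' exchange step --- delete the lex-last element of the top nonempty lower level, insert the next lex element of $F_v$, and conclude the shadow does not grow --- is precisely the hard content of the theorem, and you only assert it (``one checks''), pointing to Lemma \ref{claim} and Lemma \ref{claim2}. Those lemmas describe $\Delta(M(r))$ for the lex-initial segment $M(r)$ itself; they say nothing about the modified configurations appearing in your iteration, and the claimed \emph{set} containment of the new shadow in the old one is in general false (the inserted element of $F_v$ need not lie in the old shadow), so at best a cardinality argument is needed and none is given. This is exactly the kind of step where, as the paper notes, Wei's terse original argument went wrong, so it cannot be waved through. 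Moreover, even granting the exchange step, your iteration does not terminate at $M(r)$: pushing elements into $F_v$ produces a set concentrated in the top level, whereas $M(r)$, being the first $r$ elements of $F_{\le v}$ in \emph{descending lexicographic} order, generally contains elements of strictly smaller degree even when $|F_v|\ge r$. For instance, with $m=2$, $d_1=d_2=3$, $v=2$, $r=3$, one has $M(3)=\{(2,0),(1,1),(1,0)\}$, while your procedure ends at $F_2=\{(2,0),(1,1),(0,2)\}$; these have equal shadow size here, but your proof identifies the endpoint with $M(r)$, which is false, and the missing comparison between a top-level initial segment and $M(r)$ (as well as the saturated case $r>|F_v|$, which you defer) still requires an argument --- essentially Lemma \ref{claim2} again.

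Two smaller points. In the compression step, the equality $\bigl|\bigcup_u\Delta_v(L(S_u))\bigr|=\max_u|\Delta_v(L(S_u))|$ presupposes that the sets $\Delta_v(L(S_u))$ are themselves nested; Corollary \ref{cl} only gives $\Delta_v(L(S_u))\subseteq L(\Delta_v(S_u))$, and it is these initial segments that are nested --- you should bound the union by $\max_u|L(\Delta_v(S_u))|=\max_u|\Delta_v(S_u)|$ directly (this does suffice, so the first reduction is salvageable). Also, the induction on $v$ announced at the start is never used. For comparison, the paper avoids exchange operations entirely: it splits into two cases according to whether $|S\cap F_v|\ge |M(r)\cap F_v|$ or not, and in each case derives $|\Delta(S)|\ge r-|M_v|+|\Delta(M_v)|=|\Delta(M(r))|$ by pure counting, using Corollary \ref{ccl}, Lemma \ref{claim} (via $M_u^*$ in the second case), and Lemma \ref{claim2}. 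If you want to keep a compression-style proof, the part you must actually prove is the exchange inequality, and that is where all the difficulty of the theorem lives.
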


\begin{proof}
For $u \le v$, define $S_u = S \cap F_u$. We divide the proof into two cases:

\textbf{Case 1:}  Suppose that $|S_v| = r_v + \alpha \ge r_v$ for some $\alpha \ge 0$. Then,
$$|\Delta (S_v)| \ge |\Delta (M_v)| + \alpha.$$
This follows from Corollary \ref{ccl} applied on a subset of $S_v$ consisting of $r_v$ elements and the contribution of shadows in $F_v$ of the remaining $\alpha$ elements of $S \setminus S_v$. Further,
\begin{align*}
|\Delta (S)| &= |\Delta_{<v} (S)| + |\Delta_{\ge v}(S)| \\
&\ge |\Delta_{<v} (S)| + |\Delta (S_v)|  \ \ \ \  \mathrm{(follows \ from \  Lemma \ \ref{claim})} \\
&\ge |S \cap F_{<v}| + |\Delta (S_v)| = r - |S_v| + |\Delta (S_v)|
\end{align*}

Hence,
\begin{align*}
|\Delta (S)| &\ge r - |S_v| + |\Delta (S_v)| \\
&\ge r - r_v - \alpha + |\Delta (M_v)| + \alpha \\
&= |\Delta (M(r))| \ \ \ \ \text{(from Lemma \ref{claim2})}
\end{align*}

\textbf{Case 2:}
Now suppose that $|S_v|<|M_v|$. Since $|S|=|M(r)|$, this implies that there exists $u<v$ such that $|S_u|>|M_u|$. Hence, $|M_u^*| \le |S_u|$. By Lemma \ref{claim} and Theorem \ref{cl} we have $|M_v| \le |\Delta_v(M_u^*)| \le |\Delta_v(S_u)|.$ Hence,
\begin{align*}
|\Delta(S)| &\ge r-|S_v|+|\Delta_{\ge v}(S_u)| \\
&> r - |M_v|+|\Delta_{\ge v}(S_u)| \\
&= r - |M_v|+|\Delta(\Delta_v(S_u))| \\
&\ge r - |M_v|+|\Delta(M_v)|=|\Delta(M(r))|
\end{align*}
The last equality follows from Lemma \ref{claim2}.
\end{proof}

\section{Answer to Question \ref{q:main}}

In this section we give the answer to Question \ref{q:main} in Theorem \ref{thm:mt}. There are two main steps in the proof of this theorem. 
First, the combinatorial theory developed in the previous section is used to obtain an upper bound for $|Z(f_1,\dots,f_r) \cap \mathcal A|$. 
Further we construct an explicit family of $\FF$-linearly independent polynomials in $\Sd(\mathcal A)$  that attains this upper bound.
Our results are more general than the results presented in \cite{HP}, but 
some of the ideas are akin to that in \cite[Section 5]{HP}. For instances, the
Lemmas \ref{rth} and \ref{fp} are direct generalizations of Lemma 5.8 and Proposition 5.9 in \cite{HP} and Definition \ref{def:fa} is similar to Definition 4.10 from \cite{HP}.

\begin{lemma}\label{rth}
Let $0 \le r \le d_1 \cdots d_m - 1$ be an integer.
Write $$d_1 \cdots d_m - r = \sum_{i=1}^m a_{r, i} \prod_{j=i+1}^m d_j.$$
Then $(a_{r, 1}, \dots, a_{r, m})$ is the $r$-th tuple of $F$ in descending lexicographic order.
\end{lemma}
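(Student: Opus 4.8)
The plan is to interpret the claimed identity as a mixed-radix (factorial-base-like) expansion and to match it coordinate by coordinate with the combinatorial description of the descending-lexicographic order on $F$. Recall that $F = \{0,\dots,d_1-1\}\times\cdots\times\{0,\dots,d_m-1\}$ and that in descending lexicographic order the elements with first coordinate $d_1-1$ come first, then those with first coordinate $d_1-2$, and so on; within each such block the second coordinate ranges in descending order, and so forth. Equivalently, the map sending $(i_1,\dots,i_m)\in F$ to the integer
\[
(d_1-1-i_1)\prod_{j=2}^m d_j + (d_2-1-i_2)\prod_{j=3}^m d_j + \cdots + (d_m-1-i_m)
\]
is exactly the rank function for descending lexicographic order: it is a bijection from $F$ onto $\{0,1,\dots,d_1\cdots d_m-1\}$ because each factor $(d_\ell-1-i_\ell)$ ranges over $\{0,\dots,d_\ell-1\}$ and the weights $\prod_{j>\ell}d_j$ are the usual mixed-radix place values. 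So the $r$-th tuple (indexing from $0$) is the unique $(a_1,\dots,a_m)\in F$ whose rank equals $r$.

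Next I would rewrite the defining equation of the $a_{r,i}$ in terms of complemented coordinates. Setting $b_i := d_i - 1 - a_{r,i}$, the hypothesis $0\le r\le d_1\cdots d_m-1$ gives $0\le d_1\cdots d_m-1-r\le d_1\cdots d_m-1$, and one checks
\[
\sum_{i=1}^m a_{r,i}\prod_{j=i+1}^m d_j \;=\; (d_1\cdots d_m - 1) - \sum_{i=1}^m b_i\prod_{j=i+1}^m d_j,
\]
using $\sum_{i=1}^m (d_i-1)\prod_{j=i+1}^m d_j = d_1\cdots d_m - 1$ (a telescoping identity, proved by induction on $m$). Hence the hypothesis $d_1\cdots d_m - r = \sum a_{r,i}\prod_{j>i}d_j$ is equivalent to $\sum_{i=1}^m b_i \prod_{j=i+1}^m d_j = r - 1 + \ldots$; more precisely it becomes $\sum b_i\prod_{j>i}d_j = r$ after a one-off bookkeeping with the $\pm1$, which I will carry out carefully — this is the one slightly fiddly point. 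The key structural fact to invoke is uniqueness of the mixed-radix representation: given $0\le r\le d_1\cdots d_m-1$ there is exactly one tuple $(b_1,\dots,b_m)$ with $0\le b_i\le d_i-1$ and $\sum b_i\prod_{j>i}d_j = r$, and it is obtained by successive Euclidean division. So $(b_1,\dots,b_m)$, hence $(a_{r,1},\dots,a_{r,m})=(d_1-1-b_1,\dots,d_m-1-b_m)$, is uniquely determined by the stated equation; it remains only to identify it with the $r$-th lex tuple.

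For that identification I would argue that the rank function displayed above is order-reversing from $(F,\le_{lex})$ with lex comparing coordinates left to right, to $(\{0,\dots,d_1\cdots d_m-1\},\le)$: if $a <_{lex} a'$ then at the first coordinate $\ell$ where they differ we have $a_\ell < a'_\ell$, so $b_\ell = d_\ell-1-a_\ell > d_\ell-1-a'_\ell = b'_\ell$, and since the place value $\prod_{j>\ell}d_j$ strictly exceeds the sum $\sum_{i>\ell}(d_i-1)\prod_{j>i}d_j$ of all lower place values (again the telescoping identity), the rank of $a$ is strictly larger than that of $a'$. Therefore descending lexicographic order on $F$ corresponds to ascending order of ranks, so the $r$-th element of $F$ in descending lex order (with the paper's indexing, which by Lemma~\ref{proplex}'s usage and Notation (d) I take to start the count so that the $r$-th tuple has rank $r$) is precisely the tuple of rank $r$, namely $(a_{r,1},\dots,a_{r,m})$. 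I expect the main obstacle to be purely clerical: pinning down the off-by-one in the indexing convention for "the $r$-th tuple" and keeping the $d_1\cdots d_m - r$ versus $r$ normalization consistent throughout; the mathematical content is just uniqueness of mixed-radix digits plus the telescoping sum $\sum_{i=1}^m(d_i-1)\prod_{j=i+1}^m d_j = d_1\cdots d_m-1$.
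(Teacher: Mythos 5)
Your proposal is correct and is essentially the paper's own argument: both rest on the uniqueness of the mixed-radix expansion with place values $\prod_{j>i}d_j$ and on the compatibility of this numeric value with the lexicographic order on $F$; you merely phrase it through the complemented rank $\sum_i(d_i-1-a_i)\prod_{j>i}d_j$ and the telescoping identity, while the paper works directly with $\phi(a)=\sum_i a_i\prod_{j>i}d_j$ and observes that $d_1\cdots d_m-r$ is the $r$-th highest value. The off-by-one you flag (your $\sum b_i\prod_{j>i}d_j$ equals $r-1$ under the paper's $1$-based counting, not $r$) is exactly the indexing ambiguity already present in the lemma's stated range $0\le r\le d_1\cdots d_m-1$, so it is a clerical matter, not a gap in your argument.
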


\begin{proof}
Define a map
$$\phi : F \to \{0, 1, \dots, d_1 \cdots d_m - 1\} \ \ \ \mathrm{given \ by} \ \ \ \  (a_1, \dots, a_m) \mapsto \sum_{i=1}^m a_i \prod_{j=i+1}^m d_j.$$
Since any integer $0 \le r < d_1 \dots d_m$ can be expressed uniquely as $r = \sum_{i=1}^m a_i \prod_{j=i+1}^m d_j$, it follows that $\phi$ is surjective. Moreover, $$(a_1, \dots, a_m) \le_{lex} (b_1, \dots, b_m) \iff \sum_{i=1}^m a_i \prod_{j=i+1}^m d_j \le \sum_{i=1}^m b_i \prod_{j=i+1}^m d_j.$$
The claim follows from the fact that $d_1 \dots d_m - r$ is the $r$-th highest element of $\{0, 1, \dots, d_1 \cdots d_m - 1\}$.
\end{proof}

\begin{lemma}\label{fp}
Let $a_1, \dots, a_r$ be the first $r$ elements of $F_{\le d}$ in descending lexicographic order. Then,
$$\Delta (a_1, \dots, a_r) = \{a \in F : a_r \le_{lex} a\}.$$
Moreover, if $a_r = (a_{r, 1}, \dots, a_{r, m})$ then 
$$|\Delta (a_1, \dots, a_r)| = d_1 \cdots d_m  - \displaystyle{\sum_{i=1}^m a_{r, i} \prod_{j=i+1}^m d_j}.$$
\end{lemma}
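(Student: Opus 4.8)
The plan is to prove the two assertions in turn, the first by a symmetric containment argument and the second by a direct application of Lemma \ref{rth}.

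For the first assertion, I would establish the set equality $\Delta(a_1,\dots,a_r) = \{a \in F : a_r \le_{lex} a\}$ by proving both inclusions. The inclusion $\subseteq$ is immediate: if $a \in \Delta(a_1,\dots,a_r)$ then $a_i \le_P a$ for some $i$, hence $a_i \le_{lex} a$; since $a_1,\dots,a_r$ are the first $r$ elements of $F_{\le d}$ in descending lexicographic order and $a$ has degree at most $d$ (because $a_i \le_P a$ forces $\deg a \ge \deg a_i$, but also $a\in F$ and $a \ge_{lex} a_i$ where $a_i$ is among the top $r$; one must be slightly careful here — the cleaner route is that $a_i \le_P a$ does not bound $\deg a$ from above, but $a \in F$ and the claim only asserts membership in $\Delta \subseteq F$, so $a \ge_{lex} a_r$ follows from $a \ge_{lex} a_i \ge_{lex} a_r$). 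For the reverse inclusion, suppose $a \in F$ with $a_r \le_{lex} a$. I would invoke Lemma \ref{proplex} (via Remark \ref{iter}): let $u = \deg a_r$ and, if $\deg a > u$, take $b := \max_{lex}\{f \in F_{\deg a} : f \le_{lex} a\}$ — wait, rather the direction needed is to descend: given $a$ of degree $v \ge u$, one wants some $a_i \le_P a$. Here I would instead argue as in Lemma \ref{claim}: set $b := \max_{lex}\{f \in F_u : f \le_{lex} a\}$; by Remark \ref{iter}, $b \le_P a$. Since $a_r \le_{lex} a$ and $b$ is the lex-largest element of $F_u$ that is $\le_{lex} a$, while $a_r \in F_u$ satisfies $a_r \le_{lex} a$, we get $a_r \le_{lex} b$. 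But then $b$ is an element of $F_{\le d}$ that is lex-at-least $a_r$, so $b \in \{a_1,\dots,a_r\}$, and $b \le_P a$ gives $a \in \Delta(a_1,\dots,a_r)$.

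For the second assertion, by the first part $|\Delta(a_1,\dots,a_r)| = |\{a \in F : a_r \le_{lex} a\}|$, which counts the elements of $F$ that are lex-greater than or equal to $a_r$. Since $a_r = (a_{r,1},\dots,a_{r,m})$ is the $r$-th element of $F$ in descending lexicographic order by Lemma \ref{rth} (note $F_{\le d}$ and $F$ agree on their top $r$ elements because $F_{\le d}$ is an order ideal for $\le_{lex}$ containing the lex-largest elements — here one uses $d \le k$ and the earlier observation that $x_1^{d_1-1}\cdots x_m^{d_m-1}$, i.e. the top element, lies in $\Sd(\mathcal A)$; more precisely the first $r$ elements of $F$ in descending lex order all lie in $F_{\le d}$ provided $r \le \dim \Sd(\mathcal A)$), there are exactly $r$ elements of $F$ that are lex-strictly-smaller than $a_r$ together with the $|F| - r + 1$ — no: there are exactly $r-1$ elements strictly above $a_r$ and $a_r$ itself, so $|\{a \in F : a_r \le_{lex} a\}| = r$. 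Then applying Lemma \ref{rth} with the integer $r$ gives $a_r$ as determined by $d_1\cdots d_m - r = \sum_{i=1}^m a_{r,i}\prod_{j=i+1}^m d_j$, hence $|\Delta(a_1,\dots,a_r)| = r = d_1\cdots d_m - \sum_{i=1}^m a_{r,i}\prod_{j=i+1}^m d_j$.

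The main obstacle I anticipate is the bookkeeping in the reverse inclusion of the first part: one must correctly pass from an arbitrary $a \in F$ with $a_r \le_{lex} a$ (of possibly larger degree than $a_r$) down to an element of $F_{\deg a_r}$ that is $\le_P a$ and still lex-large enough to be among $a_1,\dots,a_r$, and this is exactly where Lemma \ref{proplex}/Remark \ref{iter} and the definition of "first $r$ elements in descending lex order of $F_{\le d}$" must be combined carefully. A secondary subtlety is justifying that the top $r$ elements of $F_{\le d}$ coincide with the top $r$ elements of $F$, so that Lemma \ref{rth} is applicable; this rests on $r \le \dim\Sd(\mathcal A) = |F_{\le d}|$ and on $F_{\le d}$ being downward-closed under $\le_{lex}$ within $F$ in the relevant range.
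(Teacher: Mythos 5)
Your proof of the set equality is essentially sound, though it takes a different route from the paper: you reduce to $b:=\max_{lex}\{f\in F_u : f\le_{lex} a\}$ with $u=\deg a_r$ and invoke Remark \ref{iter} (exactly as the paper does inside Lemma \ref{claim}), whereas the paper's own proof builds an explicit element $c$ from the first coordinate in which $a$ exceeds $a_r$ and checks directly that either $c\ge_P a_r$ or $c\in\{a_1,\dots,a_r\}$. Your version works, but you should dispose explicitly of the case $\deg a < \deg a_r$, where Remark \ref{iter} does not apply; there $a$ itself lies in $F_{\le d}$ and satisfies $a\ge_{lex} a_r$, hence is one of $a_1,\dots,a_r$ and the claim is trivial.

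The counting argument, however, contains a genuine error. It is not true that $F_{\le d}$ is an order ideal for $\le_{lex}$, nor that its first $r$ elements agree with the first $r$ elements of $F$, nor that $|\Delta(a_1,\dots,a_r)|=r$. Take $m=2$, $d_1=d_2=3$, $d=2$, $r=1$: the lex-largest element of $F$ is $(2,2)$, of degree $4>d$, while $a_1=(2,0)$; moreover $|\{a\in F:(2,0)\le_{lex}a\}|=|\{(2,0),(2,1),(2,2)\}|=3\neq 1=r$, and indeed the lemma's formula gives $9-(2\cdot 3+0\cdot 1)=3$. Applying Lemma \ref{rth} ``with the integer $r$'' produces the coordinates of the $r$-th element of all of $F$, which in general is not $a_r$ (the $r$-th element of $F_{\le d}$), so your final identity equates two different quantities. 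The intended deduction is: by the first part, $|\Delta(a_1,\dots,a_r)|$ equals the number of $a\in F$ with $a\ge_{lex}a_r$; if $a_r$ is the $s$-th element of $F$ in descending lexicographic order, this number is $s$, and Lemma \ref{rth} applied to $a_r$ itself (equivalently, the order-preserving bijection $(b_1,\dots,b_m)\mapsto\sum_{i=1}^m b_i\prod_{j>i}d_j$ from its proof) gives $d_1\cdots d_m-s=\sum_{i=1}^m a_{r,i}\prod_{j>i}d_j$, hence $|\Delta(a_1,\dots,a_r)|=s=d_1\cdots d_m-\sum_{i=1}^m a_{r,i}\prod_{j>i}d_j$. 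Your two expressions coincide only when the first $r$ elements of $F$ all have degree at most $d$ (for instance when $d=k$), whereas the lemma is asserted for all $d\le k$.
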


\begin{proof}
Let $b \in \Delta (a_1, \dots, a_r)$. There exists $i \le r$ such that $b \ge_P a_i$. Thus, $b \ge_{lex} a_i \ge_{lex} a_r$. Consequently,
$\Delta (a_1, \dots, a_r) \subseteq \{ b \in F : a_r \le_{lex} b \}.$
Conversely, let $b \ge_{lex} a_r$. If $b = a_r$ then there is nothing to prove. So we may assume that $b >_{lex} a_r$. Writing $b=(b_1,\dots,b_m)$, there exists $t \le m$ such that $b_i = a_{r, i}$ for $i < t$ and $b_t > a_{r,t}$. Define an $m$-tuple
$c = (c_1, \dots, c_m)$ as follows:
$$c_i =
\begin{cases}
a_i  \ \ \mathrm{if} \ \ i<t \\
a_t + 1 \ \mathrm{if} \ \ i = t \\
0 \ \ \ \mathrm{otherwise}
\end{cases}
$$
Clearly, $b \ge_P c$ and $c \ge_{lex} a_r$. Note that $\deg c \le \deg a_r + 1$. If $\deg c = \deg a_r + 1$, then $a_r = (a_{r, 1}, \dots, a_{r, t}, 0, \dots, 0)$ which implies that $c \ge_P a_r$. On the other hand, if $\deg c \le \deg a_r \le d$ then $c = a_i$ for some $i \le r$. Consequently, $b \in \Delta (a_1, \dots, a_r)$. This shows that $b \in \Delta (a_1, \dots, a_r)$. The claim about the number of elements follows from the Lemma \ref{rth}.
\end{proof}

\begin{proposition}\label{mt}
Let $f_1, \dots, f_d \in \Sd (\mathcal{A})$ be linearly independent over $\Fq$. Then
$$|Z(f_1, \dots, f_r) \cap \mathcal{A}| \le \sum_{i=1}^m a_{r,i} \prod_{j=i+1}^m d_j,$$
where $(a_{r,1}, \dots, a_{r, m})$ is the $r$-th element of $F_{\le d}$ in descending lexicographic order.
\end{proposition}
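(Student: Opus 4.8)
The plan is to combine the combinatorial upper bound obtained at the end of Section~2 with the sharp combinatorial result of Section~3. Recall from inequality~\eqref{maxfp} that
$$|Z(f_1, \dots, f_r) \cap \mathcal{A}| \le \max \{|F \setminus \Delta (b_1, \dots, b_r)| : b_1,\dots,b_r \in F_{\le d} \text{ distinct}\},$$
so it suffices to show that for any choice of $r$ distinct elements $b_1, \dots, b_r \in F_{\le d}$, we have
$$|F \setminus \Delta(b_1, \dots, b_r)| \le \sum_{i=1}^m a_{r,i} \prod_{j=i+1}^m d_j.$$

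\textbf{Step 1.} Apply Theorem~\ref{wei} with $v = d$: for any such $S = \{b_1, \dots, b_r\} \subseteq F_{\le d}$ we get $|\Delta(M(r))| \le |\Delta(S)|$, where $M(r)$ is the set of the first $r$ elements of $F_{\le d}$ in descending lexicographic order. Equivalently, $|F \setminus \Delta(S)| \le |F \setminus \Delta(M(r))|$, so the maximum in \eqref{maxfp} is attained (among arbitrary choices) by the lexicographically first $r$ elements. Thus it remains only to compute $|F \setminus \Delta(M(r))|$.

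\textbf{Step 2.} Write $M(r) = \{a_1, \dots, a_r\}$ with $a_1 >_{lex} \cdots >_{lex} a_r$, so $a_r = (a_{r,1}, \dots, a_{r,m})$ is the $r$-th element of $F_{\le d}$ in descending lexicographic order. By Lemma~\ref{fp},
$$|\Delta(a_1, \dots, a_r)| = d_1 \cdots d_m - \sum_{i=1}^m a_{r,i} \prod_{j=i+1}^m d_j,$$
hence $|F \setminus \Delta(M(r))| = d_1 \cdots d_m - |\Delta(M(r))| = \sum_{i=1}^m a_{r,i} \prod_{j=i+1}^m d_j$. Chaining this with Step~1 and \eqref{maxfp} gives the claimed bound, completing the proof.

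I do not expect a genuine obstacle here: the proposition is precisely the assembly point where the point-counting reduction of Section~2 meets the combinatorial optimization of Section~3, and both ingredients (Theorem~\ref{wei} and Lemma~\ref{fp}) are already in hand. The one point requiring a little care is the passage from "$r$ distinct elements of $F_{\le d}$'' in \eqref{maxfp} to the hypothesis of Theorem~\ref{wei} (which is stated for a subset $S \subseteq F_{\le v}$ of size $r$) — but these are the same condition, since the $\phi(\LT(f_i))$ are automatically distinct by the linear independence and distinctness of leading terms noted before \eqref{eq:IJ}. A second minor bookkeeping point is that \eqref{maxfp} requires $a_1, \dots, a_r \in F_{\le d}$, which matches $v = d \le k$ as assumed throughout; this is consistent with the standing reduction $d \le k$ recalled in the introduction.
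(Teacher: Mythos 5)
Your proposal is correct and follows exactly the paper's argument: the paper's proof of Proposition~\ref{mt} is precisely the one-line combination of inequality~\eqref{maxfp}, Theorem~\ref{wei} (applied with $v=d$), and Lemma~\ref{fp}, which you have simply spelled out in full detail. No gaps; the distinctness and $d\le k$ bookkeeping points you flag are handled exactly as you say.
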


\begin{proof}
This follows from inequality \eqref{maxfp}, Theorem \ref{wei} and Lemma \ref{fp}.
\end{proof}

We now construct a family of $\FF$-linearly independent polynomials $f_1,\dots,f_r$ in $\Sd (\mathcal{A})$ such that the cardinality of $Z(f_1,\dots,f_r) \cap \mathcal A$ attains the upper bound obtained in Proposition \ref{mt}. Recall that,  $A_i=\{\gamma_{i,1},\dots,\gamma_{i,d_i}\}$ for $i=1, \dots, m$.

\begin{definition}\label{def:fa}
For \hbox{$b = (b_1, \dots, b_m) \in F_{\le d}$} define the polynomial,
$$f_{b} = \prod_{s=1}^m \prod_{t=1}^{b_s}(x_s - \gamma_{s, t}).$$
\end{definition}

\begin{proposition}\label{maximal}
Let $a_1, \dots, a_r$ be the first $r$ elements of $F_{\le d}$ in descending lexicographic order.
Then, $$|Z(f_{a_1}, \dots, f_{a_r}) \cap \mathcal{A}| = \sum_{i=1}^m a_{r,i} \prod_{j=i+1}^m d_j,$$ where
$a_r = (a_{r,1}, \dots, a_{r, m})$.
\end{proposition}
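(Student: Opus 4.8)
The plan is to show that the family $f_{a_1},\dots,f_{a_r}$ from Definition \ref{def:fa} is $\FF$-linearly independent, lies in $\Sd(\mathcal A)$, and has exactly the claimed number of common zeros in $\mathcal A$; combined with Proposition \ref{mt} this forces equality. The key observation is that the leading term of $f_b$ with respect to a graded lexicographic order is (up to scalar) the monomial $x_1^{b_1}\cdots x_m^{b_m}$, so that $\phi(\LT(f_b)) = b$. Since $a_1,\dots,a_r$ are distinct elements of $F_{\le d}$, their images $\phi(\LT(f_{a_i}))$ are distinct monomials of distinct content, hence $f_{a_1},\dots,f_{a_r}$ have distinct leading terms and are therefore linearly independent. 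Moreover $\deg f_b = \deg(b) \le d$ and $\deg_{x_s} f_b = b_s \le d_s - 1$, so each $f_b \in \Sd(\mathcal A)$.

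For the zero set, first I would identify $Z(f_{a_i}) \cap \mathcal A$ explicitly: a point $(\gamma_{1,j_1},\dots,\gamma_{m,j_m}) \in \mathcal A$ is a zero of $f_{a_i}$ if and only if for some coordinate $s$ we have $j_s \le a_{i,s}$ (i.e.\ $x_s - \gamma_{s,j_s}$ is one of the factors). Indexing points of $\mathcal A$ by tuples $(j_1,\dots,j_m)$ with $1 \le j_s \le d_s$ and shifting to $(j_1-1,\dots,j_m-1) \in F$, the common zero set $Z(f_{a_1},\dots,f_{a_r}) \cap \mathcal A$ corresponds precisely to those tuples $c \in F$ such that for every $i$, there is a coordinate $s$ with $c_s < a_{i,s}$ — equivalently, $c$ is \emph{not} $\ge_P$ any $a_i$ in any "completed" sense. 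The cleanest route is to relate this directly to the complement of the shadow: I claim the map $c \mapsto$ (point of $\mathcal A$) sets up a bijection between $F \setminus \Delta(a_1,\dots,a_r)$, possibly after accounting for the index shift, and... — here one must be slightly careful, because $c_s < a_{i,s}$ for the point-index is not literally $c \not\ge_P a_i$ for the shifted tuple. Concretely, the point indexed by $(j_1,\dots,j_m)$ fails to be a zero of $f_{a_i}$ iff $j_s > a_{i,s}$ for all $s$, i.e.\ iff $j_s \ge a_{i,s}+1$ for all $s$; so it is a common zero iff for each $i$ there exists $s$ with $j_s \le a_{i,s}$. Writing $c_s = d_s - j_s$ (reverse indexing), this becomes: for each $i$ there is $s$ with $c_s \ge d_s - a_{i,s}$, which is $\Delta$ of the tuples $a_i' = (d_1 - a_{i,1},\dots,d_m-a_{i,m})$, an "order-reversed" set. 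So $|Z(f_{a_1},\dots,f_{a_r}) \cap \mathcal A| = |\Delta(a_1',\dots,a_r')|$ computed in the box $\{1,\dots,d_1\}\times\cdots$, which by symmetry equals $|\Delta(a_1,\dots,a_r)|$ in $F$.

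A more direct and less error-prone approach, which I would actually prefer to write up: use the inclusion \eqref{eq:IJ}--\eqref{ub} in reverse. For the polynomials $f_{a_i}$ one checks that $\mathcal I = \langle \LT(f_{a_1}),\dots,\LT(f_{a_r}), x_1^{d_1},\dots,x_m^{d_m}\rangle$ actually equals $\mathcal J = \LT(I(Z(f_{a_1},\dots,f_{a_r})\cap\mathcal A))$ — one shows $\mathcal J \subseteq \mathcal I$ by exhibiting, for each monomial $m \notin \mathcal I$, a point of $Z(f_{a_1},\dots,f_{a_r})\cap\mathcal A$ at which the corresponding "evaluation" is nonzero, so that $m \notin \mathcal J$; equivalently one shows $\hf{\mathcal I}(u) \le |Z(f_{a_1},\dots,f_{a_r})\cap\mathcal A|$ by producing enough points. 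The standard way is: the monomials $x^c \notin \mathcal I$ correspond to $c \in F \setminus \Delta(a_1,\dots,a_r)$ (by Lemma \ref{fp}, these are the $c$ with $c <_{lex} a_r$ together with appropriate degree/shadow bookkeeping), and one shows the evaluation matrix of these monomials against a suitable set of $|F \setminus \Delta(a_1,\dots,a_r)|$ points of $Z(f_{a_1},\dots,f_{a_r}) \cap \mathcal A$ is invertible (a Vandermonde-type argument on the Cartesian grid). Then $|Z(f_{a_1},\dots,f_{a_r}) \cap \mathcal A| \ge \hf{\mathcal I}(u) = |F\setminus\Delta(a_1,\dots,a_r)|$ for $u \ge k$, and Lemma \ref{fp} evaluates the right side as $\sum_{i=1}^m a_{r,i}\prod_{j=i+1}^m d_j$. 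Together with the opposite inequality from Proposition \ref{mt} (which already used Theorem \ref{wei}), this gives equality.

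I expect the main obstacle to be the lower bound on $|Z(f_{a_1},\dots,f_{a_r})\cap\mathcal A|$: one must correctly identify which points of the grid $\mathcal A$ lie in the common zero set and verify there are at least $\sum a_{r,i}\prod_{j>i}d_j$ of them (equivalently, that the reverse-lex / shadow-complement combinatorics matches the grid count). Concretely the delicate point is the bijection between $F \setminus \Delta(a_1,\dots,a_r)$ and an appropriate subset of $Z(f_{a_1},\dots,f_{a_r})\cap\mathcal A$, including the index reversal $c_s \leftrightarrow d_s - j_s$; once that correspondence is pinned down, the count is exactly Lemma \ref{fp} and the rest is bookkeeping. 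Everything else — membership in $\Sd(\mathcal A)$, linear independence via distinct leading terms — is routine.
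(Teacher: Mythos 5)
Your first route begins exactly as the paper's own proof: after the shift $c_s = j_s-1$, the point indexed by $(j_1,\dots,j_m)$ is a zero of $f_{a_i}$ precisely when $j_s \le a_{i,s}$ for some $s$, i.e.\ $c_s < a_{i,s}$ for some $s$, i.e.\ $c \not\ge_P a_i$; hence the common zeros correspond bijectively to $F \setminus \Delta(a_1,\dots,a_r)$, and Lemma \ref{fp} (together with Lemma \ref{rth}) gives $|Z(f_{a_1},\dots,f_{a_r}) \cap \mathcal{A}| = d_1\cdots d_m - |\Delta(a_1,\dots,a_r)| = \sum_{i=1}^m a_{r,i}\prod_{j>i}d_j$, with no need of Proposition \ref{mt} at all. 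But you then back away from this identification (``one must be slightly careful\dots''), even though for the shifted tuple it \emph{is} literally $c \not\ge_P a_i$, and the replacement step is wrong: with the reversal $c_s = d_s - j_s$ the common-zero condition reads ``for every $i$ there exists $s$ with $c_s \ge d_s - a_{i,s}$'', which is not membership in $\Delta(a_1',\dots,a_r')$ --- a shadow requires ``there exists $i$ such that for \emph{all} $s$''. The conclusion you then draw, $|Z\cap\mathcal{A}| = |\Delta(a_1,\dots,a_r)|$, is the complement of the correct count: already for $m=1$, $d_1=3$, $d=2$, $r=1$ the polynomial $f_{a_1}=(x_1-\gamma_{1,1})(x_1-\gamma_{1,2})$ has $2$ zeros in $A_1$, while $|\Delta(a_1)|=1$.

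Your fallback (``preferred'') route is not a proof as written: its entire content is the lower bound $|Z(f_{a_1},\dots,f_{a_r})\cap\mathcal{A}| \ge |F\setminus\Delta(a_1,\dots,a_r)|$, which you reduce to the invertibility of an evaluation matrix (equivalently $\mathcal{I}=\mathcal{J}$), and that ``Vandermonde-type argument'' is precisely the step you leave unexecuted. It is also unnecessary machinery: the direct count above, which is the paper's proof, yields equality outright and does not invoke Proposition \ref{mt}. The routine parts of your write-up --- $f_b \in \Sd(\mathcal{A})$ and linear independence via the distinct leading terms $x_1^{b_1}\cdots x_m^{b_m}$ (needed for Theorem \ref{thm:mt}, and a useful explicit addition) --- are fine; the counting step, which is the substance of Proposition \ref{maximal}, is however either wrong (route one as finally stated) or missing (route two).
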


\begin{proof}
Note that the map $\psi : \mathcal{A} \to F$ defined by $(\gamma_{1, i_1}, \dots, \gamma_{m, i_m}) \mapsto (i_1 - 1, \dots, i_m - 1)$ is a bijection.

For $b \in F_{\le d}$ we see that $f_{b} (\gamma_1, \dots, \gamma_m) \neq 0$ if and only if \hbox{$\gamma_i \in \{\gamma_{i, t} | t > a_s\}$} for all $i = 1, \dots, m$. 
This implies that $f_{b} (\gamma_1, \dots, \gamma_m) \neq 0$ if and only if $\psi  (\gamma_1, \dots, \gamma_m)  \in \Delta (b)$. Consequently, for $a_1, \dots, a_r \in F_{\le d}$, we have $\gamma \in \mathcal{A} \setminus Z(f_{a_1}, \dots, f_{a_r})$ if and only if $\psi (\gamma) \in \Delta (a_1, \dots, a_r)$. 
Thus, $|\mathcal{A} \setminus Z(f_{a_1}, \dots, f_{a_r})| = |\Delta (a_1, \dots, a_r)|$.  
In particular, if $a_1, \dots, a_r$ are the first $r$ elements of $F_{\le d}$ in descending lexicographic order we see that, $|\mathcal{A} \setminus Z(f_{a_1}, \dots, f_{a_r})| = |\Delta (a_1, \dots, a_r)| =  d_1 \cdots d_m  - \displaystyle{\sum_{i=1}^m a_{r, i} \prod_{j=i+1}^m d_j}$, where the last equality follows from Lemma \ref{fp}. Thus, $|Z(f_{a_1}, \dots, f_{a_r}) \cap \mathcal{A}| = \sum_{i=1}^m a_{r,i} \prod_{j=i+1}^m d_j,$
\end{proof}

\begin{theorem}\label{thm:mt}
We have
$$\max |Z(f_1, \dots, f_r) \cap \mathcal{A}| = \sum_{i=1}^m a_{r,i} \prod_{j=i+1}^m d_j,$$
where $(a_{r,1}, \dots, a_{r, m})$ is the $r$-th element of $F_{\le d}$ in descending lexicographic order and where the maximum is taken over all $\FF$-linearly independent $f_1, \dots, f_r \in \Sd (\mathcal{A})$.
\end{theorem}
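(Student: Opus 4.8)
The proof of Theorem \ref{thm:mt} assembles the two halves that have been developed above. The plan is to prove the inequality ``$\le$'' and the inequality ``$\ge$'' separately, each half being essentially immediate from a result already in hand.

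First I would establish the upper bound. Given any $\FF$-linearly independent $f_1,\dots,f_r \in \Sd(\mathcal A)$, their leading terms (with respect to a fixed graded order) are distinct, so $\phi(\LT(f_1)),\dots,\phi(\LT(f_r))$ are $r$ distinct elements of $F$. Moreover, since $\deg f_i \le d$, each lies in $F_{\le d}$. Combining the chain of inequalities \eqref{maxfp} — which bounds $|Z(f_1,\dots,f_r)\cap\mathcal A|$ by $\max\{|F\setminus\Delta(a_1,\dots,a_r)| : a_1,\dots,a_r \in F_{\le d}\}$ — with Theorem \ref{wei} (which says that among all $r$-subsets $S$ of $F_{\le d}$ the shadow is minimized, hence $|F\setminus\Delta(S)|$ is maximized, by the lexicographically first $r$ elements $M(r)$), and then Lemma \ref{fp} (which evaluates $|\Delta(M(r))| = |\Delta(a_1,\dots,a_r)|$ and hence $|F\setminus\Delta(a_1,\dots,a_r)| = \sum_{i=1}^m a_{r,i}\prod_{j=i+1}^m d_j$), gives
$$|Z(f_1,\dots,f_r)\cap\mathcal A| \le \sum_{i=1}^m a_{r,i}\prod_{j=i+1}^m d_j.$$
This is precisely Proposition \ref{mt}, so in fact this half is already done.

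For the reverse inequality it suffices to exhibit one choice of $f_1,\dots,f_r$ attaining the bound. I would take $f_i := f_{a_i}$ from Definition \ref{def:fa}, where $a_1,\dots,a_r$ are the first $r$ elements of $F_{\le d}$ in descending lexicographic order. Proposition \ref{maximal} shows $|Z(f_{a_1},\dots,f_{a_r})\cap\mathcal A| = \sum_{i=1}^m a_{r,i}\prod_{j=i+1}^m d_j$. The only remaining point to check, which is not explicitly stated in Proposition \ref{maximal}, is that these $f_{a_i}$ genuinely lie in $\Sd(\mathcal A)$ and are $\FF$-linearly independent: each $f_{a_i}$ has total degree $\deg(a_i)\le d$ and degree in $x_s$ equal to $(a_i)_s \le d_s-1$, so $f_{a_i}\in\Sd(\mathcal A)$; and they are linearly independent because the monomial $x_1^{(a_i)_1}\cdots x_m^{(a_i)_m}$ is the leading term of $f_{a_i}$ under a graded lexicographic order, and the $a_i$ are distinct, so the $f_{a_i}$ have distinct leading monomials. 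Hence the maximum is attained, proving ``$\ge$''.

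Since both halves reduce to results already proved, there is no genuine obstacle here; the theorem is a packaging statement. The only mild subtlety — and the one place I would be careful — is the bookkeeping in the upper-bound half: one must observe that distinctness of the $\LT(f_i)$ (already noted in the derivation of \eqref{eq:IJ}) combined with $\deg f_i \le d$ places $r$ \emph{distinct} points in $F_{\le d}$, so that Theorem \ref{wei} applies to the set $S=\{\phi(\LT(f_1)),\dots,\phi(\LT(f_r))\}$ with $|S|=r$; and that maximizing $|F\setminus\Delta(S)|$ is the same as minimizing $|\Delta(S)|$, the quantity Theorem \ref{wei} controls. With that noted, the proof is the one-line citation ``This follows from Proposition \ref{mt} and Proposition \ref{maximal}.''
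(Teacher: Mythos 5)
Your proposal is correct and follows exactly the paper's route: the paper's proof of Theorem \ref{thm:mt} is the one-line combination of Proposition \ref{mt} (upper bound) and Proposition \ref{maximal} (attainment by the polynomials $f_{a_i}$ of Definition \ref{def:fa}). The extra checks you supply --- that the $f_{a_i}$ lie in $\Sd(\mathcal A)$ and are linearly independent via distinct leading monomials --- are left implicit in the paper but are worth noting and are verified correctly.
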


\begin{proof}
This follows from Proposition \ref{mt} and Proposition \ref{maximal}.
\end{proof}

\section{Affine cartesian codes and their higher weights}
In this section we relate our results with coding theory and obtain a complete determination of the generalized Hamming weights of a class of codes containing the well-known Reed--Muller codes as a particular case. Throughout this section we assume $\FF = \Fq$, where $\Fq$ denotes the finite field with $q$ elements, but otherwise we use the same notation as before. In particular, we assume that $d_1 \le \dots \le d_m$ are positive integers and $A_1, \dots, A_m$ are subsets of $\Fq$ of cardinality $d_1, \dots, d_m$ respectively. As before, denote by $\mathcal{A}$ the cartesian product $\mathcal{A}=A_1 \times \cdots \times A_m$. Also we fix an enumeration $P_1, \dots, P_n$ of elements in $\mathcal{A}$ and a positive integer $d \le k:=\sum_{i=1}^m (d_i - 1).$

Recall that, a linear code of length $N$ and dimension $K$ is simply a linear subspace of $\Fq^N$ of dimension $K$. One class of codes related to the setting in this article is obtained as follows.
\begin{definition}
Let $ev$ be the map defined by
$$ev: S_{\le k} (\mathcal{A}) \to \Fq^{|\mathcal A|} \ \ \ \mathrm{by} \ \ \ \ f \mapsto (f(P_1), \dots, f(P_n)).$$
Then for $d \le k$ we define $AC_q (d, \mathcal A):=ev(\Sd (\mathcal{A}))$. 
\end{definition}

Note that $AC_q (d, \mathcal A)$ is a linear code since $ev$ is a linear map. It has length $n:=|\mathcal A|=d_1\cdots d_m$ and it follows from the injectivity of $ev$ that the dimension of $AC_q (d, \mathcal{A})$ is  $\dim \Sd(\mathcal A)$.
The codes obtained in this way are called \textit{affine cartesian codes}.
Affine cartesian codes were defined in \cite{LRV} and further studied in, for example, \cite{C, KK, GT, CN}. In \cite[Theorem 3.8]{LRV} the authors determined the minimum distance of these codes. 
\begin{remark}\label{rem}
\
\begin{enumerate}
\item[(a)] If $A_1 = \dots = A_m = \Fq$, then the code $AC_q (d, \mathcal A)$ is the generalized Reed-Muller code $RM_q (d, m)$.
\item[(b)] If $A_1 = \dots = A_m = \Fq\backslash\{0\}$, the code $AC_q (d, \mathcal A)$ is a toric code, see \cite{H}.
\end{enumerate}
\end{remark}
In this section we completely determine the generalized Hamming weights of affine cartesian codes. 
For the ease of the reader, we recall the definition of generalized Hamming weights of linear codes.
\begin{definition}
Let $D \subseteq \Fq^N$ be a subspace of dimension $r$. The \textit{support} of $D$ is defined to be
$$\supp (D) := \{i : \mathrm{there \ exists} \ (c_1, \dots, c_N) \in D \ \mathrm{such \ that} \ c_i \neq 0\}.$$
Let $C \subset \Fq^N$ be a code (i.e., linear subspace) of dimension $K$. For $1 \le r \le K$, the $r$th \textit{generalized Hamming weight} of $C$, denoted by $d_r (C)$ is defined as,
$$d_r (C) := \min \{|\supp (D)| : D \subseteq C, \dim D = r\}.$$
The quantity $d_1(C)$ is simply the minimum distance of the code $C$. 
\end{definition}

\begin{theorem}\label{hwacc}
Let $d_r (d, \mathcal A)$ denote the $r$-th generalized Hamming weight of $AC_q (d, \mathcal A)$. Then
$$d_r (d, \mathcal A) = 1 + \sum_{i=1}^m a_{r,i} \prod_{j=i+1}^m d_j,$$
where $(a_{r,1}, \dots, a_{r, m})$ is the $r$-th element of $F_{\ge k -  d}$ in ascending lexicographic order.
\end{theorem}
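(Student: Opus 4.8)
The plan is to translate the coding-theoretic statement about generalized Hamming weights into the geometric language of Question~\ref{q:main} via the standard duality between supports of subcodes and common zero sets of polynomials, and then invoke Theorem~\ref{thm:mt}. Concretely, I would first observe that for an $r$-dimensional subspace $D \subseteq AC_q(d,\mathcal A)$, injectivity of $ev$ gives an $r$-dimensional subspace $W \subseteq \Sd(\mathcal A)$ with $ev(W) = D$; choosing a basis $f_1,\dots,f_r$ of $W$ one has $i \notin \supp(D)$ exactly when $f_1(P_i) = \cdots = f_r(P_i) = 0$, so that $|\supp(D)| = n - |Z(f_1,\dots,f_r) \cap \mathcal A|$. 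Taking the minimum over all such $D$ and using Theorem~\ref{thm:mt} yields
$$d_r(d,\mathcal A) = n - \max |Z(f_1,\dots,f_r) \cap \mathcal A| = d_1\cdots d_m - \sum_{i=1}^m a_{r,i}' \prod_{j=i+1}^m d_j,$$
where $(a_{r,1}',\dots,a_{r,m}')$ is the $r$-th element of $F_{\le d}$ in descending lexicographic order. The remaining work is therefore purely combinatorial: I must show that this last expression equals $1 + \sum_{i=1}^m a_{r,i}\prod_{j=i+1}^m d_j$ with $(a_{r,1},\dots,a_{r,m})$ the $r$-th element of $F_{\ge k-d}$ in \emph{ascending} lexicographic order.

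For the combinatorial identity I would use the involution $\sigma : F \to F$ sending $(i_1,\dots,i_m)$ to $(d_1-1-i_1,\dots,d_m-1-i_m)$. This is a bijection reversing the partial order $\le_P$ and also reversing lexicographic order, and it sends $F_{\le d}$ bijectively onto $F_{\ge k-d}$ since $\deg \sigma(a) = k - \deg(a)$. Hence $\sigma$ sends the first $r$ elements of $F_{\le d}$ in descending lex order to the first $r$ elements of $F_{\ge k-d}$ in ascending lex order, so if $(a_{r,1},\dots,a_{r,m})$ denotes the latter then the $r$-th element of $F_{\le d}$ in descending lex order is $(d_1-1-a_{r,1},\dots,d_m-1-a_{r,m})$. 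Substituting $a_{r,i}' = d_i - 1 - a_{r,i}$ into $d_1\cdots d_m - \sum_i a_{r,i}'\prod_{j>i} d_j$ and using the telescoping identity $\sum_{i=1}^m (d_i-1)\prod_{j=i+1}^m d_j = d_1\cdots d_m - 1$ gives
$$d_1\cdots d_m - \sum_{i=1}^m (d_i - 1 - a_{r,i})\prod_{j=i+1}^m d_j = d_1\cdots d_m - (d_1\cdots d_m - 1) + \sum_{i=1}^m a_{r,i}\prod_{j=i+1}^m d_j = 1 + \sum_{i=1}^m a_{r,i}\prod_{j=i+1}^m d_j,$$
which is exactly the claimed formula.

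I expect the main (though still mild) obstacle to be the bookkeeping around the lexicographic order under $\sigma$: one has to be careful that complementing each coordinate simultaneously reverses \emph{both} the degree-graded structure and the lex order within a given degree, and that ``first $r$ in descending'' maps cleanly to ``first $r$ in ascending.'' I would also double-check that $\sigma$ genuinely restricts to a bijection $F_{\le d} \to F_{\ge k-d}$ rather than to some other slice, which is immediate from $\deg\sigma(a) = k - \deg(a)$ but worth stating explicitly. The telescoping identity $\sum_{i=1}^m(d_i-1)\prod_{j>i}d_j = \prod_i d_i - 1$ can be verified by a one-line induction on $m$. Everything else — the reduction via $ev$ and the appeal to Theorem~\ref{thm:mt} — is routine.
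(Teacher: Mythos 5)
Your proposal is correct and follows essentially the same route as the paper: reduce via Theorem~\ref{thm:mt} to $d_1\cdots d_m - \sum_i b_{r,i}\prod_{j>i}d_j$ with $(b_{r,i})$ the $r$-th element of $F_{\le d}$ in descending lex order, then apply the coordinate-complement bijection $F_{\le d}\to F_{\ge k-d}$ (which reverses lex order) together with the identity $\sum_i (d_i-1)\prod_{j>i}d_j = d_1\cdots d_m-1$. The only difference is that you spell out the support-versus-zero-set reduction that the paper leaves implicit, which is a harmless (indeed helpful) elaboration rather than a different argument.
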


\begin{proof}
It follows from Theorem \ref{thm:mt} that
\begin{equation}\label{conn1}
d_r (d, \mathcal A) = d_1 \cdots d_m - \sum_{i=1}^m b_{r,i} \prod_{j=i+1}^m d_j,
\end{equation}
where $(b_{r,1}, \dots, b_{r, m})$ is the $r$-th element of $F_{\le d}$ in descending lexicographic order. Further we note that,
\begin{equation}\label{conn2}
d_1 \cdots d_m - 1 = \sum_{i=1}^m (d_i - 1) \prod_{j=i+1}^m d_j.
\end{equation}
From \eqref{conn1} and \eqref{conn2} we see that,
\begin{align*}
d_r (d,\mathcal A) &= 1 + (d_1 \cdots d_m - 1) - \sum_{i=1}^m b_{r,i} \prod_{j=i+1}^m d_j \\
&= 1 + \sum_{i=1}^m (d_i - 1 - b_{r, i}) \prod_{j=i+1}^m d_j.
\end{align*}
We define $a_{r, i} = d_i - 1 - b_{r, i}$ for $i =1, \dots, m$. The assertion of the theorem now follows noting that the map $(b_1, \dots, b_m) \mapsto (d_1 - 1 - b_1, \dots, d_m - 1 - b_m) : F_{\le d} \to F_{\ge k -  d}$ is a bijection that reverses the lexicographic order on elements of $F_{\le d}$.
\end{proof}
This theorem has a number of corollaries, relating it to previously known results. In the first place, we recover a result concerning the minimum distance of $AC_q (d, \mathcal{A})$.
\begin{corollary}\cite[Theorem 3.8]{LRV}
The minimum distance of $AC_q (d, \mathcal A)$ is given by $(d_{j+1} - \ell) d_{j+2} \cdots d_m$, where $j \ge 0$ and $\ell \le d_{j+1} - 1$ are uniquely determined integers such that $d = \ell+\sum_{i=1}^j (d_i - 1)$.
\end{corollary}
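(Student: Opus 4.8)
The plan is to specialize Theorem~\ref{hwacc} to the case $r=1$ and then make the relevant lexicographically first tuple completely explicit. By Theorem~\ref{hwacc} the minimum distance equals
$$d_1(d,\mathcal A)=1+\sum_{i=1}^m a_{1,i}\prod_{s=i+1}^m d_s,$$
where $(a_{1,1},\dots,a_{1,m})$ is the first element of $F_{\ge k-d}$ in ascending lexicographic order. So the whole statement reduces to pinning down this single tuple.

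First I would observe that the lexicographically smallest element $a$ of $F_{\ge k-d}$ necessarily has $\deg(a)=k-d$: if $\deg(a)>k-d$, then subtracting $1$ from the last nonzero coordinate of $a$ produces a strictly lexicographically smaller element of $F$ whose degree is still $\ge k-d$, contradicting minimality. Next, to be lexicographically smallest one keeps leading coordinates as small as the degree budget permits. Concretely, let $j\ge 0$ and $\ell\ge 0$ be the integers of the statement, so that $d=\ell+\sum_{i=1}^j(d_i-1)$ and $\ell\le d_{j+1}-1$. I claim
$$a=(\underbrace{0,\dots,0}_{j},\,d_{j+1}-1-\ell,\,d_{j+2}-1,\dots,d_m-1).$$
Indeed, $\deg(a)=\sum_{i=j+1}^m(d_i-1)-\ell=k-d$; the first $j$ coordinates cannot be lowered below $0$; once they are $0$, minimality of $a_{j+1}$ forces $a_{j+1}=d_{j+1}-1-\ell$ (this is the least value making the remaining degree attainable, and it lies in $\{0,\dots,d_{j+1}-1\}$ since $0\le\ell\le d_{j+1}-1$), and then $a_{j+2},\dots,a_m$ must equal $d_{j+2}-1,\dots,d_m-1$ to reach total degree $k-d$. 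Hence no lexicographically smaller tuple of $F$ has degree $\ge k-d$, so this $a$ is the required element.

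Substituting this $a$ into the formula of Theorem~\ref{hwacc} gives
$$d_1(d,\mathcal A)=1+(d_{j+1}-1-\ell)\prod_{s=j+2}^m d_s+\sum_{i=j+2}^m(d_i-1)\prod_{s=i+1}^m d_s.$$
Now I would apply the telescoping identity $\sum_{i=t}^m(d_i-1)\prod_{s=i+1}^m d_s=\prod_{s=t}^m d_s-1$ (this is \eqref{conn2} with the factors $d_1,\dots,d_{t-1}$ omitted) with $t=j+2$, turning the last sum into $\prod_{s=j+2}^m d_s-1$. The $+1$ and $-1$ cancel, and the two remaining copies of $\prod_{s=j+2}^m d_s$ combine to $(d_{j+1}-\ell)\prod_{s=j+2}^m d_s=(d_{j+1}-\ell)d_{j+2}\cdots d_m$, which is exactly the asserted value. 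The only genuinely non-routine step is the combinatorial identification of the lexicographically first element of $F_{\ge k-d}$, together with verifying that the integers $j,\ell$ it yields coincide with the uniquely determined ones in the statement; everything after that is bookkeeping with the telescoping identity already used in the proof of Theorem~\ref{hwacc}.
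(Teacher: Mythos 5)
Your proposal is correct and follows essentially the same route as the paper: specialize Theorem~\ref{hwacc} to $r=1$, identify the extremal tuple explicitly, and simplify via the telescoping identity \eqref{conn2}. The only cosmetic difference is that you determine the ascending-lex-first element of $F_{\ge k-d}$ directly (with a short justification), whereas the paper reads off the descending-lex-first element $(d_1-1,\dots,d_j-1,\ell,0,\dots,0)$ of $F_{\le d}$ and passes through the complement map from the proof of Theorem~\ref{hwacc}; these yield the same tuple and the same computation.
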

\begin{proof}
The first element of $F_{\le d}$ in descending lexicographic order is given by $(d_1 - 1, \dots, d_j -1, \ell, 0, \dots, 0)$. 
From Theorem \ref{hwacc} and its proof we see that the minimum distance of $AC_q (d, \mathcal A)$ is equal to
\begin{align*}
& 1 + \sum_{i=1}^j (d_i - 1 - (d_i -1)) \prod_{s=i+1}^m d_s + (d_{j+1} - 1 - \ell) d_{j+2} \cdots d_m + \sum_{i=j+2}^m (d_i - 1) \prod_{s=i+1}^m d_s \\
&= (d_{j+1} - \ell) d_{j+2} \cdots d_m.
\end{align*}
The last equality follows noting that
$$1 - d_{j+2} \cdots d_m  + \sum_{i=j+2}^m (d_i - 1) \prod_{s=i+1}^m d_s = 0,$$
which completes the proof.
\end{proof}
As another consequence of Theorem \ref{thm:mt}, we recover the generalized Hamming weights of Reed-Muller codes $RM_q(d,m)$. This was obtained in \cite[Theorem 7]{W} for $q=2$ and in \cite[Theorem 5.10]{HP} for any prime powers $q$.

\begin{corollary}\cite[Thm. 5.10]{HP}
The $r$-th higher weights of $\RM_q (d, m)$ is given by
$$d_r (d, m) = 1 + \sum_{i=1}^m a_i q^{m-i}$$
where $(a_1, \dots, a_m)$ denotes the $r$-th element of $F_{\ge m(q-1) -  d}$ in ascending lexicographic order.
\end{corollary}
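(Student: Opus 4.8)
The plan is to obtain this corollary as a direct specialization of Theorem \ref{hwacc} to the case $A_1 = \dots = A_m = \Fq$, so that $d_1 = \dots = d_m = q$. First I would invoke Remark \ref{rem}(a), which in this situation identifies $AC_q(d, \mathcal A)$ with the generalized Reed--Muller code $\RM_q(d, m)$; hence $d_r(d, m) = d_r(d, \mathcal A)$ for every admissible $r$, and it suffices to rewrite the right-hand side of Theorem \ref{hwacc} under the assumption $d_i = q$.

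Next I would substitute $d_i = q$ for all $i$ into the bookkeeping appearing in Theorem \ref{hwacc}. With this choice one has $k = \sum_{i=1}^m (d_i - 1) = m(q-1)$ and $\prod_{j=i+1}^m d_j = q^{m-i}$ for each $i = 1, \dots, m$. Moreover the index set $F = \{0, \dots, d_1 - 1\} \times \dots \times \{0, \dots, d_m - 1\}$ becomes $\{0, \dots, q-1\}^m$, so that $F_{\ge k - d}$ becomes $F_{\ge m(q-1) - d}$. Feeding these identities into the formula
$$d_r(d, \mathcal A) = 1 + \sum_{i=1}^m a_{r,i} \prod_{j=i+1}^m d_j$$
of Theorem \ref{hwacc} yields at once $d_r(d, m) = 1 + \sum_{i=1}^m a_{r,i} q^{m-i}$, where $(a_{r,1}, \dots, a_{r,m})$ is the $r$-th element of $F_{\ge m(q-1) - d}$ in ascending lexicographic order, which is precisely the claimed expression.

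There is essentially no obstacle here: the corollary is a pure specialization, and the only things to check are the trivial equalities $k = m(q-1)$ and $\prod_{j=i+1}^m d_j = q^{m-i}$ together with the harmless identification of $F$ with $\{0,\dots,q-1\}^m$. If one wishes to match the statement verbatim with \cite[Thm. 5.10]{HP}, it remains only to note that the lexicographic ordering on $F_{\ge m(q-1)-d}$ used there coincides with the one in our notation; this is immediate and requires no further argument.
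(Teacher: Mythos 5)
Your proposal is correct and matches the paper's intent: the paper's own proof is the one-line observation that the corollary follows from Remark \ref{rem}(a) together with the main result (it cites Theorem \ref{thm:mt}, though the formula in the stated form really comes via Theorem \ref{hwacc}, which is exactly the specialization you carry out). Your explicit substitution $d_i = q$, $k = m(q-1)$, $\prod_{j=i+1}^m d_j = q^{m-i}$ is precisely the intended argument, just spelled out in more detail.
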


\begin{proof}
This follows directly from Remark \ref{rem}(a) and Theorem \ref{thm:mt}.
\end{proof}

We finish this section by an observation on the generalized Hamming weights of $AC_q(d,\mathcal A)$. Generalized Hamming weights have a number of properties. One of these, called Wei-duality \cite[Thm.3]{W} is the following. For a linear code $C \subset \Fq^N$ of dimension $K$, consider the linear subspace of $\Fq^N$
$$C^\perp:=\{(e_1,\dots,e_N) : c_1\cdot e_1+\cdots + c_N\cdot e_N=0  \ \mathrm{for \ all} \ (c_1, \dots, c_N) \in C\}.$$
The code $C^\perp$ has dimension $N-K$ and is called the dual code of $C$. 
The following well-known statement, that relates the higher weights of a linear code to that of its dual, is sometimes referred to as Wei duality and can be found in \cite[Thm 3]{W}.
\begin{equation}\label{eq:weidual}
\{d_1(C),\dots,d_K(C)\} \cup \{N+1-d_1(C^\perp),\dots,N+1-d_{N-K}(C^\perp)\} = \{1,\dots,N\}.
\end{equation}
Note that the union in equation \eqref{eq:weidual} is a disjoint union. 
Further, a direct computation using equation \eqref{conn2} shows that the sets $\{d_r(d, \mathcal A) : 1 \le r \le \dim \Sd(\mathcal A)\}$ and $\{n+1-d_r(k-d-1, \mathcal A) : 1 \le r \le \dim S_{\le k-d-1}(\mathcal A)\}$ are disjoint and have union $\{1,\dots,n\}$.  
For Reed-Muller codes, this is very simple to derive from Wei duality, since $RM_q(d,m)^\perp=RM_q(m(q-1)-d-1,m)$. 
But for affine cartesian codes this is not true in general. However,  the following result offers an explanation of the above observation. 
As in Section 2, we will use the polynomials $g_i:=\prod_{j=1}^{d_i}(x_i-\gamma_{i,j}).$ 
Further, denote by $g_i'(x_i)$, the partial derivative of $g_i$ with respect to $x_i$. We use our enumeration $P_1,\dots,P_n$ of the elements of $\mathcal A$ as in the beginning of this Section.
\begin{theorem}\label{thm:duals}
We have $$AC_q(d,\mathcal A)^\perp:=\{(w_1c_1,\dots,w_{n}c_{n}) : (c_1,\dots,c_{n}) \in AC_q(k-d-1,\mathcal A)\},$$ where $w_j^{-1}=\prod_{i=1}^m g_i'(P_j).$
\end{theorem}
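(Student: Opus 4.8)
The plan is to exhibit an explicit generator matrix for $AC_q(k-d-1,\mathcal A)$ and its ``weighted'' version, and then verify the orthogonality and dimension count directly. First I would fix the standard monomial basis: by the discussion in Section 2, the monomials $x_1^{a_1}\cdots x_m^{a_m}$ with $a_i \le d_i-1$ for all $i$ form a basis of $S(\mathcal A):=S/I(\mathcal A)$, and those with $a_1+\cdots+a_m \le d$ give a basis of $\Sd(\mathcal A)$. So a natural spanning set of $AC_q(d,\mathcal A)$ is $\{ev(x^a) : a \in F_{\le d}\}$ and similarly $\{ev(x^b) : b \in F_{\le k-d-1}\}$ spans $AC_q(k-d-1,\mathcal A)$. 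The key computation is then the ``inner product'' $\sum_{j=1}^n w_j\, P_j^{a}\, P_j^{b}$ for $a\in F_{\le d}$, $b\in F_{\le k-d-1}$, where $P_j^a$ abbreviates the evaluation of the monomial $x^a$ at $P_j$. Since $\mathcal A$ is a Cartesian product and $w_j^{-1}=\prod_i g_i'(P_j)$ also factors over the coordinates, this sum factors as $\prod_{i=1}^m \left(\sum_{t=1}^{d_i} \gamma_{i,t}^{a_i+b_i}/g_i'(\gamma_{i,t})\right)$.

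The heart of the argument is therefore a one-variable identity: for a monic polynomial $g(x)=\prod_{t=1}^{D}(x-\gamma_t)$ with distinct roots over $\Fq$, one has
\begin{equation*}
\sum_{t=1}^{D} \frac{\gamma_t^{\,\ell}}{g'(\gamma_t)} = \begin{cases} 0 & \text{if } 0 \le \ell \le D-2,\\ 1 & \text{if } \ell = D-1.\end{cases}
\end{equation*}
This is the classical Lagrange-interpolation / Euler–Jacobi vanishing statement: $\sum_t x^\ell/(g'(\gamma_t)(x-\gamma_t))$ is, for $\ell \le D-1$, the Lagrange interpolation polynomial of $x^\ell$ through the $\gamma_t$, hence equals $x^\ell$, and comparing the coefficient of $x^{D-1}$ (equivalently, evaluating the partial fraction expansion of $x^\ell/g(x)$ at infinity) yields the stated values; alternatively it follows from the partial fraction decomposition $1/g(x)=\sum_t 1/(g'(\gamma_t)(x-\gamma_t))$ multiplied by $x^\ell$ and expanded at infinity. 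I would prove this identity as a short lemma. Applying it coordinatewise: whenever $a\in F_{\le d}$ and $b\in F_{\le k-d-1}$ we have $a_i+b_i \le d_i-1$ for \emph{every} $i$ is \emph{not} automatic, so the real point is that $\sum_i(a_i+b_i) \le d + (k-d-1) = k-1 = \sum_i(d_i-1) - 1$; hence at least one index $i$ has $a_i+b_i \le d_i-2$, forcing the corresponding factor — and thus the whole product — to vanish. This shows $AC_q(k-d-1,\mathcal A)$, rescaled by the $w_j$, is contained in $AC_q(d,\mathcal A)^\perp$.

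Finally I would close the argument by a dimension count: the rescaling map $(c_j)\mapsto(w_jc_j)$ is a bijection of $\Fq^n$, so the right-hand side has dimension $\dim AC_q(k-d-1,\mathcal A) = \dim S_{\le k-d-1}(\mathcal A)$, and one checks (counting monomials in $F$, using the bijection $b\mapsto(d_i-1-b_i)$ between $F_{\le d}$ and $F_{\ge k-d}$, equivalently between $F_{\le k-d-1}$ and $F_{\ge d+1}$) that $\dim \Sd(\mathcal A) + \dim S_{\le k-d-1}(\mathcal A) = |F| = n$, which matches $\dim AC_q(d,\mathcal A)^\perp = n - \dim AC_q(d,\mathcal A)$. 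Equality of dimensions together with the inclusion gives the theorem. The main obstacle is purely the bookkeeping around the one-variable vanishing identity and making sure the degree budget $d+(k-d-1)<k$ is exploited correctly to guarantee a vanishing factor; once that lemma is in hand, everything else is the product/factorization structure of Cartesian codes plus a routine dimension tally.
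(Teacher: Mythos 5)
Your proposal is correct and follows essentially the same route as the paper's proof: reduce to monomials, use the one-variable Lagrange-interpolation identity $\sum_t \gamma_t^{\ell}/g'(\gamma_t)=0$ for $\ell\le D-2$, exploit the degree budget $d+(k-d-1)=k-1$ to force a vanishing coordinate factor, and conclude by the matching dimension count. Nothing further is needed.
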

\begin{proof}
Let us for convenience write $$C:=\{(w_1d_1,\dots,w_{n}c_{n}) : (c_1,\dots,c_{n}) \in AC_q(k-d-1,\mathcal A)\}.$$ 
Observe that $$\dim C = \dim AC_q(k-d-1,\mathcal A) = \dim S_{\le k-d-1}(\mathcal A)=\dim S_{\le k}(\mathcal A) - \dim S_{\ge k-d}(\mathcal A),$$
while
$$\dim AC_q(d,\mathcal A)^\perp =|\mathcal A| - \dim AC_q(d,\mathcal A)= \dim S_{\le k}(\mathcal A) - \dim S_{\le d}(\mathcal A).$$
Since $\dim S_{\le d}(\mathcal A)=\dim S_{\ge k-d}(\mathcal A)$, both codes $C$ and $AC_q(d,\mathcal A)^\perp$ have the same dimension. Therefore, the theorem follows once we show that $C \subset AC_q(d,\mathcal A)^\perp.$

Now let $0 \le \ell \le d_i-1$. Since the univariate polynomials 
$\sum_{j=1}^{d_i} \frac{g_i(x_i)}{x_i-\gamma_{i,j}}\frac{\gamma_{i,j}^\ell}{g_i'(\gamma_{i,j})}$ and $x_i^\ell$ have the same evaluation for any element of $A_i$, but have degree strictly less than $d_i$, they are equal. 
Comparing coefficients of $x_i^{d_i-1}$, we obtain that
\begin{equation}\label{eq:m=1}
\sum_{j=1}^{d_i}\frac{\gamma_{i,j}^\ell}{g_i'(\gamma_{i,j})}=\left\{ \begin{array}{rl}1 & \makebox{ if $\ell=d-1$}\\ 0 & \makebox{otherwise.}\end{array}\right.
\end{equation} 
Any codeword $c\in C$ is of the form $c=(w_1 f(P_1),\dots,w_{n}f(P_{n})) \in C$ for some $f \in S_{\le k-d-1}(\mathcal A)$ 
and likewise any codeword $\tilde{c}\in AC_q(d,\mathcal A)$ is of the form $\tilde{c}(\varphi(P_1),\dots,\varphi(P_n))$ for some $\varphi \in \Sd (\mathcal A).$ 
To show that $c_1\cdot \tilde{c}_1+\cdots+c_n\cdot \tilde{c}_n=0$, it is enough to show that this equality holds whenever $f$ and $\varphi$ are monomials.
Therefore, we will assume that $f$ and $\varphi$ are monomials from now on and write $f \cdot \varphi = x_1^{e_1} \cdots x_m^{e_m}$.
Since $\deg f \cdot \varphi \le k-d-1+d=k-1$, there exists at least one value of $i$ such that $e_i=\deg_{x_i} f\cdot \varphi < d_i-1$. 
Then we have 
$$\sum_{j=1}^n c_j\tilde{c}_j=\sum_{j=1}^n \frac{x_1^{e_1}\cdots x_m^{e_m} (P_j)}{(g_1'(x_1)\cdots g_m'(x_m))(P_j)}=\prod_{i=1}^m \sum_{j=1}^{d_i}\frac{\gamma_{i,j}^{e_i}}{g_i'(\gamma_{i,j})}.$$
Equation \eqref{eq:m=1} and the fact that $e_i<d_i-1$ for at least one value of $i$, then imply that $\sum_{j=1}^n c_j\tilde{c}_j=0$. This shows that $c \in AC_q(d,\mathcal A)^\perp$, which completes the proof.  
\end{proof}
\begin{corollary}
The $r$-th generalized Hamming weight of the code $AC_q(d,\mathcal A)^\perp$ is given by $d_{r}(k-d-1,\mathcal A)$.
\end{corollary}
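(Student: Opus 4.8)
The plan is to deduce the corollary directly from Theorem~\ref{thm:duals} together with the description of generalized Hamming weights already available from Theorem~\ref{hwacc}. The starting observation is that generalized Hamming weights depend only on the \emph{monomial matroid} of a code, or more elementarily, that they are invariant under multiplying the coordinates of a code by fixed nonzero scalars. Concretely, if $C' = \{(w_1 c_1, \dots, w_n c_n) : (c_1,\dots,c_n) \in C\}$ for fixed $w_j \in \Fq^\times$, then for any subspace $D \subseteq C$ of dimension $r$ the image $D' = \{(w_1 e_1,\dots,w_n e_n) : (e_1,\dots,e_n) \in D\}$ is a subspace of $C'$ of the same dimension $r$, and since each $w_j \neq 0$ we have $\supp(D') = \supp(D)$. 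Hence the map $D \mapsto D'$ is a dimension- and support-preserving bijection between the $r$-dimensional subspaces of $C$ and those of $C'$, so $d_r(C') = d_r(C)$ for every $r$.

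First I would record this invariance as the key lemma (a one-line argument as above). Then I would apply it with $C = AC_q(k-d-1,\mathcal A)$ and the scalars $w_j$ as in Theorem~\ref{thm:duals}, whose defining property $w_j^{-1} = \prod_{i=1}^m g_i'(P_j)$ guarantees $w_j \in \Fq^\times$ (each $g_i$ is squarefree, so $g_i'(\gamma_{i,t}) \neq 0$ for every root $\gamma_{i,t}$, hence $g_i'(P_j) \neq 0$ for all $j$). By Theorem~\ref{thm:duals} this scaled code is precisely $AC_q(d,\mathcal A)^\perp$. Therefore $d_r\bigl(AC_q(d,\mathcal A)^\perp\bigr) = d_r\bigl(AC_q(k-d-1,\mathcal A)\bigr) = d_r(k-d-1,\mathcal A)$, which is the claim.

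I do not expect any serious obstacle here: the corollary is essentially immediate once Theorem~\ref{thm:duals} identifies the dual with a coordinate-scaling of another affine cartesian code, because generalized Hamming weights are manifestly insensitive to such scalings. The only point that needs a word of care is verifying $w_j \neq 0$, i.e.\ that the $g_i$ are separable so their derivatives do not vanish at their own roots — but this is built into the definition $g_i = \prod_{j=1}^{d_i}(x_i - \gamma_{i,j})$ with distinct $\gamma_{i,j}$. One could alternatively phrase the whole argument by noting that a coordinatewise-invertible diagonal transformation is a monomial equivalence of codes and monomially equivalent codes have identical weight hierarchies; either formulation yields the result in a couple of lines.
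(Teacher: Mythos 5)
Your proposal is correct and follows the same route as the paper, which simply observes that Theorem~\ref{thm:duals} exhibits $AC_q(d,\mathcal A)^\perp$ as a coordinatewise scaling of $AC_q(k-d-1,\mathcal A)$ and hence the two codes share their weight hierarchy; you merely spell out the (true and worth noting) details that such scalings preserve supports of subspaces and that $w_j\neq 0$ since each $g_i$ is separable.
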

\begin{proof}
Theorem \ref{thm:duals} directly implies that the codes $AC_q(d,\mathcal A)^\perp$ and $AC_q(k-d-1,\mathcal A)$ have the same generalized Hamming weights.  
\end{proof}

\section{Acknowledgments}

The authors would like to gratefully acknowledge the following foundations and institutions: 
Peter Beelen is supported by The Danish Council for Independent Research (Grant No. DFF--4002-00367). Mrinmoy Datta is supported by The Danish Council for Independent Research (Grant No. DFF--6108-00362).

\end{document}